\newcommand{\edit}[1]{}
\def\Aut{\operatorname{Aut}}
\def\s{\sigma}
\def\naive{{\text{naive}}}
\def\cube#1#2#3#4#5#6#7#8{
& #5 \ar[rr] \ar[dl] \ar@{-}[d] && #6 \ar[dd] \ar[dl] \\
#1 \ar[rr] \ar[dd]  & \ar[d] & #2 \ar[dd] \\
& #7 \ar@{-}[r] \ar[dl] & \ar[r] & #8 \ar[dl] \\
#3 \ar[rr] && #4 \\
}
\def\Hom{\operatorname{Hom}}
\def\a{\alpha}
\def\b{\beta}
\def\smsh{\wedge}
\def\ker{\operatorname{ker}}
\def\cone{\operatorname{cone}}
\def\cP{\mathcal P}
\def\cE{\mathcal E}
\def\cO{\mathcal O}
\def\trace{\operatorname{trace}}
\def\Tor{\operatorname{Tor}}
\def\Spec{\operatorname{Spec}}
\def\lra{\longrightarrow}
\def\into{\hookrightarrow}
\def\onto{\twoheadrightarrow}
\newcommand{\Q}{\mathbb{Q}}
\newcommand{\C}{\mathbb{C}}
\newcommand{\Z}{\mathbb{Z}}
\newcommand{\fm}{{\mathfrak m}}
\numberwithin{equation}{section}
\theoremstyle{plain} 
\newtheorem{thm}[equation]{Theorem}
\newtheorem{introthm}{Theorem}
\newtheorem*{introthm*}{Theorem}
\newtheorem{cor}[equation]{Corollary}
\newtheorem{lem}[equation]{Lemma}
\newtheorem{prop}[equation]{Proposition}
\theoremstyle{definition}
\newtheorem{defn}[equation]{Definition}
\newtheorem{ex}[equation]{Example}
\theoremstyle{remark}
\newtheorem{rem}[equation]{Remark}
\newcommand{\supp}{\operatorname{supp}}
\newcommand{\xra}[1]{\xrightarrow{#1}}
\def\g{\gamma}
\def\z{\zeta}
\def\len{\operatorname{length}}
\def\codim{\operatorname{codim}}
\def\cPsi{\psi_{\mathrm{cyc}}}
\def\and{{ \text{ and } }}
\def\Op{\operatorname{Op}}
\def\res{\operatorname{res}}
\def\ind{\operatorname{ind}}
\def\sign{{\mathrm{sign}}}
\def\naive{{\mathrm{naive}}}
\def\l{\lambda}
\newcommand{\Kos}{\operatorname{Kos}}
\begin{document}
\begin{abstract} 
Let $Q$ be a commutative, Noetherian ring and $Z \subseteq \Spec(Q)$ a closed subset. 
Define $K_0^Z(Q)$ to be the Grothendieck group of those bounded complexes of finitely generated projective $Q$-modules that have  homology supported on $Z$.
We develop ``cyclic'' Adams operations on $K_0^Z(Q)$ 
and we prove these operations satisfy the four axioms used by Gillet and Soul\'e in \cite{GS87}. From this we recover a shorter proof of Serre's Vanishing Conjecture. We also
show our cyclic Adams operations agree with the Adams operations defined by Gillet and Soul\'e in certain cases.

Our definition of the cyclic Adams  operators is inspired by a formula due to Atiyah 
\cite{Ati66}. They have also been introduced and studied before by Haution \cite{Haution}.
\end{abstract}

\title{Cyclic Adams operations}
\author{Michael K. Brown}
\address{Hausdorff Center for Mathematics, Villa Maria, Endenicher Allee 62, D-53115 Bonn, Germany}
\email{mbrown@math.uni-bonn.de}

\author{Claudia Miller}
\address{Mathematics Department, Syracuse University, Syracuse, NY 13244-1150, USA}
\email{clamille@syr.edu}

\author{Peder Thompson}
\address{Department of Mathematics, University of Nebraska, Lincoln, NE 68588-0130, USA}
\email{pthompson4@math.unl.edu}

\author{Mark E. Walker}
\address{Department of Mathematics, University of Nebraska, Lincoln, NE 68588-0130, USA}
\email{mark.walker@unl.edu}

\thanks{This work was partially supported by a grant from the
 Simons Foundation 
(\#318705 for Mark Walker) 
and grants from the National Science Foundation 
(NSF Award DMS-0838463 for Michael Brown and Peder Thompson and 
NSF Award DMS-1003384 for Claudia Miller).}

\maketitle
\tableofcontents

\section{Introduction}

In 1987, Gillet and Soul\'e \cite{GS87}
developed a theory of Adams operations on the Grothendieck group of chain complexes of locally free coherent sheaves on a scheme that satisfy a support
condition, 
and they proved these operations 
satisfy the four key axioms (A1)--(A4) listed below.
As a major application of this theory, Gillet and Soul\'e  proved Serre's Vanishing Conjecture in full generality. (Serre \cite{Ser65} proved this in many
cases, 
and Roberts \cite{Rob85} also proved the general case, independently and at about the same time as Gillet and Soul\'{e}.) The goal of this paper is to develop 
an alternative, simpler, notion of Adams operations on such Grothendieck groups, in certain important cases, one which is based on an idea due to Atiyah
\cite{Ati66}. 
As a consequence, we arrive at a proof of the full case of Serre's Vanishing Conjecture that is considerably shorter than the proofs of Gillet-Soul\'e or Roberts.

In more detail, suppose $X$ is a separated, Noetherian scheme and $Z \subseteq X$ is a closed subset; define $K_0^Z(X)$ to be the Grothendieck group of bounded
complexes of locally free coherent sheaves 
on $X$ whose homology is supported on $Z$. This is the abelian group generated by the isomorphism classes of such complexes, modulo relations coming from short
exact sequences and quasi-isomorphisms. 
For each integer $k \geq 1$, Gillet  and Soul\'e define an operator
$$
\psi^k_{GS}: K_0^Z(X) \to K_0^Z(X)
$$
and they prove \cite[Prop 4.12]{GS87} that this operator satisfies the following axioms:
\begin{enumerate}
\item[(A1)] $\psi_{GS}^k$ is a homomorphism of abelian groups for all $X$ and $Z$;
\item[(A2)]  $\psi_{GS}^k$  is multiplicative: if $Z, W$ are closed subsets of $X$, $\a \in K_0^Z(X)$, and $\b \in K_0^W(X)$, then 
$$
\psi_{GS}^k(\a \cup \b) = \psi_{GS}^k(\a) \cup \psi_{GS}^k(\b) \in K_0^{Z \cap W}(X),
$$
where $- \cup -$ is the pairing induced by tensor product of complexes;

\item[(A3)]  
$\psi_{GS}^k$ is functorial, in the sense that, given a morphism 
$\phi: Y \to X$ and closed subsets $W \subseteq Y$ and $Z \subseteq X$ such that $\phi^{-1}(Z) \subseteq W$, 
we have an equality 
$$
\psi_{GS}^k \circ \phi^* = \phi^* \circ \psi_{GS}^k
$$
of maps $K_0^Z(X) \to K_0^W(Y)$; and 
\item[(A4)]  
if $Q$ is a commutative Noetherian ring with unit and $a \in Q$ is a non-zero-divisor, 
then
$$
\psi_{GS}^k([K(a)]) = k[K(a)] \in K_0^{V(a)}(\Spec Q),
$$
where $K(a) := (\cdots \to 0 \to  Q \xra{a} Q \to 0 \to \cdots)$ is the Koszul complex on $a$.
\end{enumerate}

Serre's Vanishing Conjecture follows from the existence of such an operator for any one value of $k \geq 2$; see
\cite[\S 5]{GS87}.

Gillet and Soul\'e's
construction of the operator $\psi_{GS}^k$ involves first establishing $\lambda$-operations, $\l^k$ for all $k \geq 1$,  on $K_0^Z(X)$. These are defined  
using the Dold-Puppe construction \cite{Dol58,DP58} (see also \cite{Kan58}) of exterior powers of chain complexes concentrated in non-negative degrees. In detail, if $\cE$ is a bounded complex of locally free coherent sheaves on $X$ that is supported on $Z$ and 
concentrated in non-negative degrees (i.e., $\cE_i = 0$ for $i < 0$), we let
$K(\cE)$ be the associated simplicial sheaf given by the Dold-Puppe functor $K$. 
Write $\Lambda^k_{\cO_X} K(\cE)$ for the simplicial sheaf obtained by applying
$\Lambda_{\cO_X}^k(-)$ degreewise to $K(\cE)$. Let $N(\Lambda_{\cO_X}^k K(\cE))$ be the chain complex given by applying the normalized chain complex
functor $N$. 
Gillet-Soul\'e \cite[\S4]{GS87} prove that, for all closed subsets $Z$ of $X$
and all integers $k \geq 0$,  there is a function
$$
\lambda^k_{GS}: K_0^Z(X) \to K_0^Z(X) 
$$
such that, if $\cE$  is concentrated in non-negative degrees, 
then 
$$
\lambda^k_{GS}([\cE]) = [N(\Lambda_{\cO_X}^k K(\cE))].
$$ 
Moreover, 
they prove that the operations $\lambda^k_{GS}$, $k \geq 1$, 
make $\bigoplus_Z K_0^Z(X)$ into a (special) lambda ring.
The operator $\psi_{GS}^k$ is then defined, as is customary, to be $Q_k(\l^1, \dots,
\l^k)$ where $Q_k$ is the $k$-th Newton polynomial.

In this paper, we build operations that satisfy the four axioms (A1)--(A4) using a simpler construction, albeit one that exists  only in a somewhat restrictive
setting.
In detail, we fix a prime $p$ and assume
$X = \Spec(Q)$, for a commutative Noetherian ring $Q$ such that $Q$ contains $\frac{1}{p}$ and all the $p$-th roots of unity. For any closed subset $Z$ of
$\Spec(Q)$, we construct the {\em $p$-th cyclic Adams operator}, which is a function 
$$
\cPsi^p: K_0^Z(Q) \to K_0^Z(Q),
$$
characterized by the following property:
if $F$ is a bounded complex of finitely generated projective $Q$-modules, then
$$
\cPsi^p([F]) =  [T^p(F)^{(1)}] - [T^p(F)^{(\zeta)}].
$$
Here, $T^p(F)$ denotes the $p$-th tensor power of the complex $F$, equipped with the canonical, signed action of the symmetric group $\Sigma_p$, 
and the superscript  ${}^{(w)}$ denotes the eigenspace of eigenvalue $w$ for the action of the $p$-cycle $(1 \, 2 \, \cdots \, p) \in \Sigma_p$. 
In particular, the definition of $\cPsi^p$ bypasses entirely the construction of $\lambda$-operations. 
The idea for the definition of $\cPsi^p$
goes back to Atiyah \cite{Ati66} (see also Benson
\cite{Ben84} and End \cite{End69}).

One of our main results is:

\begin{introthm} 
(See Theorem \ref{psi-GS-axioms}  for the precise statement.)
For any prime $p$, the $p$-th cyclic Adams operation $\cPsi^p$ satisfies 
 the four Gillet-Soul\'e axioms (A1)--(A4) on the category of affine schemes $\Spec(Q)$ with the property that $Q$ contains $\frac{1}{p}$ and all $p$-th roots of unity.
\end{introthm}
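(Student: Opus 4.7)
The plan is to verify the four axioms in turn, organized around two foundational observations. First, since $Q$ contains $\tfrac{1}{p}$ and all $p$-th roots of unity $\mu_p$, the group algebra $Q[\Z/p]$ splits as $\prod_{w \in \mu_p} Q$, so every $Q[\Z/p]$-complex $M$ decomposes canonically as $\bigoplus_{w \in \mu_p} M^{(w)}$ into a direct sum of subcomplexes; I apply this to $M = T^p(F)$. Second, $T^p(F)$ carries the full signed $\Sigma_p$-action, and the normalizer $N_{\Sigma_p}(\Z/p) = \F_p^\times \ltimes \Z/p$ conjugates the $p$-cycle through its full automorphism group $\F_p^\times$. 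Consequently, for each $a \in \F_p^\times$ a lift in $\Sigma_p$ induces a $Q$-linear complex isomorphism $T^p(F)^{(\zeta^j)} \xrightarrow{\sim} T^p(F)^{(\zeta^{aj})}$, so all non-trivial eigenspaces $T^p(F)^{(\zeta^j)}$, $j = 1, \dots, p-1$, are isomorphic as complexes. This second observation will be crucial for (A2). Well-definedness of $\cPsi^p$ on $K_0^Z(Q)$ is then routine: each eigenspace is a direct summand of a bounded complex of projectives; if $x \not\in Z$ then $F_x$ is acyclic, whence $T^p(F)_x = T^p(F_x)$ is acyclic and so are its summands, handling both the support condition and invariance under quasi-isomorphism. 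Axiom (A3) is immediate from the naturality of every construction involved.

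For axiom (A1), the decomposition
\[
T^p(F \oplus G) = \bigoplus_{S \subseteq \{1, \dots, p\}} F^{\otimes S} \otimes G^{\otimes S^c}
\]
is $\Z/p$-equivariant, with $\Z/p$ permuting the subsets $S$. Since $p$ is prime, only $\emptyset$ and $\{1, \dots, p\}$ are fixed; every other $S$ sits in a free orbit of size $p$, and the corresponding orbit sum is isomorphic, as a $\Z/p$-complex, to the induced complex $Q[\Z/p] \otimes_Q (F^{\otimes S} \otimes G^{\otimes S^c})$ (the Koszul sign twist preserves the regular-representation character). All eigenspaces of this induced complex are isomorphic to $F^{\otimes S} \otimes G^{\otimes S^c}$, so free-orbit contributions cancel in the difference defining $\cPsi^p$; only the two fixed-subset summands survive, yielding $\cPsi^p([F]) + \cPsi^p([G])$. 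An analogous argument applied to the $\Z/p$-equivariant filtration of $T^p(F)$ by number of factors landing in $F'$ extends additivity to short exact sequences $0 \to F' \to F \to F'' \to 0$.

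For axiom (A2), the natural isomorphism $T^p(F \otimes G) \cong T^p(F) \otimes T^p(G)$ is equivariant under the diagonal $\Z/p$-action, so $T^p(F \otimes G)^{(w)} = \bigoplus_{uv = w} T^p(F)^{(u)} \otimes T^p(G)^{(v)}$. Invoking $T^p(F)^{(u)} \cong T^p(F)^{(\zeta)}$ for $u \neq 1$ (and similarly for $G$) and counting pairs with $uv = 1$ or $uv = \zeta$, a short calculation in $K_0$ produces
\[
[T^p(F \otimes G)^{(1)}] - [T^p(F \otimes G)^{(\zeta)}] = \bigl([T^p(F)^{(1)}] - [T^p(F)^{(\zeta)}]\bigr) \cdot \bigl([T^p(G)^{(1)}] - [T^p(G)^{(\zeta)}]\bigr),
\]
which is $\cPsi^p([F]) \cup \cPsi^p([G])$.

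For axiom (A4), I compute $\cPsi^p([K(a)])$ by changing basis in the regular representation. Writing $K(a) = \Lambda^*_Q(Qv)$ with $d(v) = a$ gives $T^p(K(a)) \cong \Lambda^*_Q(Q^p)$ with $d(v_j) = a$. Substituting $w_m := \sum_{j=1}^p \zeta^{(j-1)m} v_j$, one finds $d(w_0) = pa$ and $d(w_m) = 0$ for $m \neq 0$, so
\[
T^p(K(a)) \cong K(pa) \otimes_Q \Lambda^*_Q(Qw_1 \oplus \cdots \oplus Qw_{p-1})
\]
as complexes, with the $\Z/p$-action trivial on $K(pa)$ and scaling $w_m$ by $\zeta^{-m}$ on the second factor. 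A character computation, using $\prod_{m=1}^{p-1}(1 - \zeta^{-mj}) = p$ for $j \not\equiv 0 \pmod p$, shows that the Euler characteristic of the $w$-eigenspace of this exterior algebra equals $p-1$ for $w = 1$ and $-1$ for $w \neq 1$; combined with the isomorphism $K(pa) \cong K(a)$ (since $p$ is a unit in $Q$), this gives $\cPsi^p([K(a)]) = (p-1)[K(a)] - (-1)[K(a)] = p[K(a)]$. The main obstacle, in my view, is axiom (A2): the desired identity fails at the cycle level, and its resolution genuinely requires upgrading from the $\Z/p$-action to the full $\Sigma_p$-action to identify the non-trivial eigenspaces; the Koszul-sign bookkeeping in (A4) is cleanly bypassed by the eigenbasis change.
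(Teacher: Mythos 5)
Your proposal is correct in all essentials, but it is organized differently from the paper, in a way worth spelling out. The paper first proves a stand-alone result (Theorem \ref{thm-t^k}) that the $\Sigma_n$-equivariant power operation $[X]\mapsto [T^n(X)]$ is well defined on $K_0^Z(Q)$ with values in $K_0^Z(Q;\Sigma_n)$, using the filtration of Lemma \ref{lem712a} together with a power-series monoid built from the star pairing; $\cPsi^p$ is then the composite of $t^p_\Sigma$, restriction to $C_p$, and $\phi^p=\sum_\zeta\zeta\phi^p_\zeta$ with values in $K_0^Z(Q)\otimes_\Z\Z[e^{2\pi i/p}]$, integrality coming from the conjugation argument of Lemma \ref{lem:indofzeta}. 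You instead work directly with the two-term formula $[T^p(F)^{(1)}]-[T^p(F)^{(\zeta)}]$ and verify the defining relations of $K_0^Z(Q)$ by hand: your additivity argument (orbit decomposition of $T^p(F\oplus G)$ by subsets, free orbits giving induced $C_p$-complexes whose eigenspaces cancel, then a filtration to reduce short exact sequences to direct sums) bypasses the power-operation formalism entirely and is essentially Haution's route, which the paper explicitly cites as an alternative for (A1)--(A3); the paper's version of the same cancellation is the double-coset computation showing the cross terms are extended, plus Lemma \ref{lem2}. For (A2) you count eigenvalue pairs after identifying all nontrivial eigenspaces, which gives the same $2\times 2$ cancellation as the paper's Lemma \ref{philemma}; note, though, that your claim that this identification is ``genuinely required'' for (A2) is overstated --- the paper proves multiplicativity of $\phi^p$ over $\Z[\zeta]$ without it, and only needs the identification to land in integral $K$-theory. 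For (A4) your eigenbasis change $w_m=\sum_j\zeta^{(j-1)m}v_j$ is, up to the harmless factor $p$, exactly the paper's computation. Two small points to tighten: the identification of the graded pieces of your filtration (the content of Lemma \ref{lem712a}, proved in the paper by a rank count) is asserted rather than proved, and quasi-isomorphism invariance should be argued via the fact that $T^p$ of a quasi-isomorphism of bounded complexes of projectives is an equivariant quasi-isomorphism and the eigenspace functors are exact, rather than via acyclicity of stalks alone; neither issue affects the correctness of the overall argument.
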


The hypotheses involving the prime $p$  are not significant restrictions for many purposes. Note that if $Q$ is local, then $p$ is invertible in $Q$
for all primes other than the residue characteristic. Moreover, the requirement that $Q$ contain the $p$-th roots of unity is a mild one, since adjoining such
roots gives an \'etale  extension of $Q$. In particular, 
Serre's Vanishing Conjecture is a direct consequence of the above Theorem, via the same argument used by Gillet and Soul\'e;
see Corollary \ref{cor728c}.

\edit{Added paragraph}
Since the first version of this paper was made publicly available, 
we have learned that the operator $\cPsi^p$ has also been defined previously by Haution in his thesis \cite{Haution}. Haution
works more generally over schemes, but just for schemes defined over a ground field $k$ not of characteristic $p$. (We do believe, however, that most of his
proofs go through under the more general context of schemes over $\Z[\frac{1}{p}, e^{\frac{2 \pi i}{p}}]$.) In his thesis, Haution establishes the existence,
naturality, additivity and multiplicativity of these operators --- i.e., he establishes axioms (A1)--(A3) in the above list. He does not establish (A4), and his
approach is different in that he bypasses introducing the power operations that we develop in Section \ref{sectwo}. These power operations seem to be
necessary for the proof of (A4), and also for our proof of the commutativity of the cyclic Adams operations in Section \ref{seccommute}. 
For these reasons, we have kept this paper mostly unchanged from the original version, but we have added careful indications of which results presented here can
also be found in \cite{Haution}.

In addition to the results described  above, we also address the issue of whether the operator $\cPsi^p$ agrees with the $p$-th Adams operation of Gillet-Soul\'e.
We believe that they coincide whenever both are defined, but are only able to prove it in the case 
that $p!$ is invertible in $Q$; see Corollary \ref{cor930}. 

In developing the proof of Corollary \ref{cor930}, we also show that if $k!$ is invertible in $Q$,
then Gillet and Soul\'e's  operation $\l^k$ may be defined by taking ``naive'' exterior powers of complexes; see Theorem \ref{gs=hashimoto}. 
This fact is a ``folklore'' result (see the discussion at the beginning of \S \ref{E^k}), but we provide a careful proof here.

In addition to their simplicity, 
another advantage the operators $\cPsi^p$ have over the operators defined by Gillet-Soul\'e is that their definition ports well to other contexts where the
Dold-Puppe functors are unavailable.  In a forthcoming
paper we establish the existence of analogously defined  operators $\cPsi^p$ on the $K$-theory of matrix factorizations, and we 
prove that the analogues of the four Gillet-Soul\'e axioms hold. Using
these properties, we prove a conjecture of Dao and Kurano \cite[3.1 (2)]{DK12} concerning the vanishing of the $\theta$-invariant.

We thank Luchezar Avramov for helpful conversations in preparing this document, Dave Benson for leading us to his relevant paper \cite{Ben84} as well as
answering some of our questions, and Paul Roberts for sharing his unpublished notes \cite{RobertsUnpub}
describing Hashimoto's result mentioned in the beginning of \S \ref{E^k}. We also thank Olivier Haution for drawing our attention to his thesis.

\section{Tensor power operations} \label{sectwo}

Let $Q$ be a Noetherian, commutative ring, 
$Z \subseteq \Spec(Q)$ a closed subset, and $G$ a finite group. 
Let $\cP^Z(Q; G)$ denote the category of bounded complexes of finitely generated projective $Q$-modules
with homology supported on $Z$ and equipped with a left $G$-action (with $G$ acting via chain maps). Morphisms are $G$-equivariant chain maps.
Equivalently, $\cP^Z(Q; G)$ consists of bounded complexes of left $Q[G]$-modules which, upon restricting scalars along $Q \subseteq Q[G]$, are complexes of finitely
generated projective $Q$-modules supported on $Z$. 

Let $K_0^Z(Q;G)$ denote the Grothendieck group of $\cP^Z(Q;G)$, defined to be the group generated by isomorphism classes of objects modulo the relations 
$$
[X] = [X'] + [X'']
$$
if there exists an (equivariant) short exact sequence $0 \to X' \to X \to X'' \to 0$ and
$$
[X] = [Y]
$$
if there exists an (equivariant) quasi-isomorphism joining $X$ and $Y$.  
Observe that the group operation is realized by direct sum of complexes:
$[X] + [Y] = [X \oplus Y]$.

We write $\cP^Z(Q)$ and $K_0^Z(Q)$ when $G$ is the trivial group.

\begin{rem}\label{remK0andP} 
$K_0^Z(Q;G)$ can equivalently be described as the abelian monoid of isomorphism classes of objects of $\cP^Z(Q; G)$, under the operation of direct
  sum, modulo the 
  two relations above. For observe that for any $X \in \cP^Z(Q;G)$, we have the short exact sequence $0 \to X \to \cone(X \xra{=} X) \to \Sigma(X) \to 0$, where
$\Sigma(X)$ denotes the suspension of $X$. It follows that $[X] + [\Sigma(X)] = 0$, and hence that this monoid is an abelian group. 

In particular, $K_0^Z(Q;G)$ has the following universal mapping property: given an abelian monoid $M$ and an assignment of an element $(X) \in M$ to each
object $X$ of $\cP^Z(Q; G)$ such that $(0) = 0$, 
$(X) = (X') + (X'')$ 
if there exists a short exact sequence $0 \to X' \to X \to X'' \to 0$, and $(X) = (Y)$ if $X$ and $Y$ are quasi-isomorphic, then there exists a unique group
homomorphisms $K_0^Z(Q; G) \to U(M)$ sending $[X]$ to $(X)$, where $U(M)$ denotes the group of units of $M$. 
\end{rem}

Tensor product over $Q$, with the group action given by the diagonal action, induces a pairing on $K_0^Z(Q;G)$ making it into a non-unital ring. 
If $Z = \Spec(Q)$, then $K_0^{\Spec Q}(Q; G)$ is a unital ring, with $1 = [Q]$, 
and there is a ring isomorphism
$$
R_Q(G) \xra{\cong} K_0^{\Spec Q}(Q; G),
$$
where $R_Q(G)$ denotes the representation ring of $G$ with $Q$ coefficients: 
By definition, $R_Q(G)$ is the abelian group generated by isomorphism classes of
projective $Q$-modules equipped with a $G$-action, modulo relations coming from short exact sequences. The isomorphism sends the class of a representation $\rho: G \to \Aut_Q(P)$ to
the class of the evident complex concentrated in degree $0$. The inverse map sends the class of a complex to the alternating sum of the classes of its
components.  
As a special case of this, we have
$K_0(Q) \cong K_0^{\Spec Q}(Q)$.

For any $n \geq 1$, let $\Sigma_n$ denote the group of permutations of the set $\{1, \dots, n\}$. 
For $n \geq 1$ and $X \in \cP^Z(Q;G)$, let $T^n(X) \in \cP^Z(Q; G \times \Sigma_n)$ be the complex $\overbrace{X \otimes_Q \cdots \otimes _Q X}^n$ equipped with
the diagonal $G$-action,
$$
g(x_1 \otimes \cdots \otimes x_n) = g(x_1) \otimes \cdots \otimes g(x_n),
$$
and equipped with a 
$\Sigma_n$-action  given by
$$
\sigma(x_1 \otimes \cdots \otimes x_n) = 
\pm \, x_{\sigma(1)} \otimes \cdots \otimes x_{\sigma(n)},
$$
where the sign is uniquely determined by the following rule: if $\sigma$ is the adjacent transposition $(i \; i+1)$ for some $1 \leq i \leq n-1$, then 
$$
\sigma(x_1 \otimes \cdots \otimes x_n) = 
(-1)^{|x_i||x_{i+1}|} x_1 \otimes \cdots \otimes x_{i-1}  \otimes x_{i+1} 
\otimes x_i \otimes x_{i+2} \otimes \cdots \otimes x_n.
$$

For $0 \leq i \leq n$, let $\Sigma_{i,n-i}$ denote the subgroup of $\Sigma_n$ consisting of permutations that stabilize the subsets $\{1, 2, \dots, i\}$ and
$\{i+1, i+2,  \dots, n\}$. 
We identify $\Sigma_{i,n-i}$ with  $\Sigma_i \times \Sigma_{n-i}$ in the obvious way.  
If  $X \in \cP^Z(Q; \Sigma_{i,n-i} \times G)$ 
then $Q[\Sigma_n] \otimes_{Q[\Sigma_{i,n-i}]} X$ is in $\cP^Z(Q;\Sigma_n \times G)$.

\begin{thm} 
\label{thm-t^k}
For any $Q$, $Z$, $G$, and $n \geq 1$ as above, there is a function
$$
t^n_{\Sigma}: K_0^Z(Q;G) \to K_0^Z(Q; G \times \Sigma_n)
$$
such that 
$$
t^n_\Sigma([X]) = [T^n(X)]
$$
for any object $X$ of $\cP^Z(Q;G)$. Moreover, 
if $0 \to X' \to X \to X'' \to 0$ is a short exact sequence of objects of $\cP^Z(Q; G)$, then
$$
t^n_\Sigma([X]) = \sum_{i=0}^n \left[  Q[\Sigma_n] \otimes_{Q[\Sigma_{i, n-i}]} T^i(X') \otimes_Q T^{n-i}(X'') \right].
$$
\end{thm}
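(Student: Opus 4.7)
The plan is to \emph{define} $t^n_\Sigma$ by $[X] \mapsto [T^n(X)]$ and then verify that this assignment descends to a well-defined function on $K_0^Z(Q;G)$. By Remark~\ref{remK0andP}, $K_0^Z(Q;G)$ is the quotient of the monoid of isomorphism classes of $\cP^Z(Q;G)$ (under direct sum) by the congruence generated by the short exact sequence and quasi-isomorphism relations; so it will suffice to check that $[T^n(X)] \in K_0^Z(Q;G \times \Sigma_n)$ is invariant under the elementary moves generating this congruence. The second (``moreover'') assertion of the theorem will then fall out for free as an artifact of the main step.

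The central technical ingredient I would construct is, for each short exact sequence $0 \to X' \to X \to X'' \to 0$ in $\cP^Z(Q;G)$, a $G \times \Sigma_n$-equivariant filtration
$$
0 = F_{n+1} \subseteq F_n \subseteq \cdots \subseteq F_1 \subseteq F_0 = T^n(X)
$$
whose associated graded pieces are
$$
F_i/F_{i+1} \cong Q[\Sigma_n] \otimes_{Q[\Sigma_{i,n-i}]} \bigl(T^i(X') \otimes_Q T^{n-i}(X'')\bigr),
$$
viewed as objects of $\cP^Z(Q;G \times \Sigma_n)$. Heuristically, $F_i$ is spanned by simple tensors in which at least $i$ of the factors come from the subcomplex $X'$; because each component $X_j$ is projective, the sequence splits termwise, which makes this description rigorous. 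Defining $F_i$ in a position-independent way is what keeps it $\Sigma_n$-stable under the signed action, while the identification of the $i$-th graded piece as an induced representation reflects that the set of $i$-element subsets of $\{1,\dots,n\}$ is $\Sigma_n/\Sigma_{i,n-i}$. Splitting this filtration into short exact sequences and applying the defining relations in $K_0^Z(Q;G \times \Sigma_n)$ yields
$$
[T^n(X)] = \sum_{i=0}^n [Q[\Sigma_n] \otimes_{Q[\Sigma_{i,n-i}]} T^i(X') \otimes_Q T^{n-i}(X'')],
$$
which is the ``moreover'' claim of the theorem.

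For well-definedness, I would check invariance under both kinds of elementary moves. Quasi-isomorphism invariance is the easy case: because objects of $\cP^Z(Q;G)$ are bounded complexes of finitely generated projectives and hence flat, any quasi-isomorphism $X \simeq Y$ gives a quasi-isomorphism $T^n(X) \simeq T^n(Y)$ in $\cP^Z(Q;G \times \Sigma_n)$, so $[T^n(X)] = [T^n(Y)]$. For a short exact sequence move $[A \oplus U] \leftrightarrow [A' \oplus A'' \oplus U]$ arising from $0 \to A' \to A \to A'' \to 0$ and an arbitrary $U \in \cP^Z(Q;G)$, I would apply the displayed formula above to both of the short exact sequences $0 \to A' \to A \oplus U \to A'' \oplus U \to 0$ and $0 \to A' \to A' \oplus A'' \oplus U \to A'' \oplus U \to 0$. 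Both produce the identical expression $\sum_i [Q[\Sigma_n] \otimes_{Q[\Sigma_{i,n-i}]} T^i(A') \otimes_Q T^{n-i}(A'' \oplus U)]$, so $[T^n(A \oplus U)] = [T^n(A' \oplus A'' \oplus U)]$, and the function descends to $K_0^Z(Q;G)$.

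The main obstacle will be the careful set-up of the filtration: the subcomplexes $F_i$ must be simultaneously stable under the differential, the diagonal $G$-action, and the \emph{signed} permutation action of $\Sigma_n$, and the graded pieces must be correctly identified as induced representations. The Koszul signs introduced by the grading of $X$ must be tracked throughout, and matching the two candidate $\Sigma_n$-actions on the $i$-th graded piece (permuting positions of factors versus permuting cosets of $\Sigma_{i,n-i}$ in $\Sigma_n$) requires a direct check.
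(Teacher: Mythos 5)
Your proposal is correct, and its core technical content --- the $G\times\Sigma_n$-equivariant filtration of $T^n(X)$ whose associated graded pieces are the induced complexes $Q[\Sigma_n]\otimes_{Q[\Sigma_{i,n-i}]} T^i(X')\otimes_Q T^{n-i}(X'')$ --- is exactly the paper's Lemma \ref{lem712a} (you would verify the identification of the graded pieces via the termwise splitting coming from projectivity; the paper instead checks injectivity of the natural surjection by a rank count, and both work). Where you genuinely diverge is in how well-definedness is handled. Since $X\mapsto[T^n(X)]$ is not additive on short exact sequences, the paper cannot feed this single assignment into the universal property of Remark \ref{remK0andP}; it therefore introduces the star pairings of Lemma \ref{lemstar}, packages all tensor powers into the power series $t(X)=1+[X]z+[T^2(X)]z^2+\cdots$ in an abelian monoid $M$, shows $t$ is multiplicative on short exact sequences and invariant under quasi-isomorphism, obtains a group homomorphism $t\colon K_0^Z(Q;G)\to U(M)$, and defines $t^n_\Sigma$ by extracting the $z^n$ coefficient. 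You instead use the presentation of $K_0^Z(Q;G)$ as the quotient of the monoid of isomorphism classes by the congruence generated by the two relations and check invariance of $[T^n(-)]$ under the elementary moves; your observation that $A\oplus U$ and $A'\oplus A''\oplus U$ sit in short exact sequences with the same subobject $A'$ and the same quotient $A''\oplus U$, so the filtration formula yields literally identical right-hand sides, disposes of the short-exact-sequence move without any multiplicativity, and the quasi-isomorphism move is covered since $T^n$ of an equivariant quasi-isomorphism of bounded complexes of projectives is again one. Your route avoids Lemma \ref{lemstar} entirely (no associativity or commutativity of $\star$ is needed) at the cost of invoking the explicit chain-of-elementary-moves description of the generated congruence; the paper's route costs more set-up but produces the exponential homomorphism $t$ into $U(M)$ and the $\star$-formalism, which it reuses later (for instance in the proof of additivity of $\cPsi^p$). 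Both approaches deliver the ``moreover'' formula in the same way, as the class-level consequence of the filtration.
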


\begin{rem} See \cite[II.3.4 and II.3.8]{Haution} for a similar result involving
  direct sums of complexes and for a method of reducing the case of a
  short exact sequence to a direct sum. 
\end{rem} \edit{added remark}

The proof of the Theorem occupies the remainder of this section.

\begin{lem} \label{lemstar}
The bi-functor 
$$
\cP^Z(Q; \Sigma_i \times G) \times 
\cP^Z(Q; \Sigma_j \times G)
\to 
\cP^Z(Q; \Sigma_{i+j} \times G)
$$
sending $(X,Y)$ to $Q[\Sigma_{i+j}] \otimes_{Q[\Sigma_{i,j}]} X \otimes_Q Y$, equipped with the diagonal $G$-action, induces a bilinear pairing
$$
\star = \star_{i,j}: K_0^Z(Q; \Sigma_i\times G) \times
K_0^Z(Q; \Sigma_j \times G) \to
K_0^Z(Q; \Sigma_{i+j}\times G).
$$
The pairing is associative and commutative, in the sense that
$$
(a \star_{i,j} b) \star_{i+j,k} c = 
a \star_{i, j+k} (b \star_{j,k} c)
$$
and
$$
a \star_{i,j} b = b \star_{j,i} a.
$$
\end{lem}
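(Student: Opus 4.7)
The plan is to establish three things in sequence: well-definedness of the induced pairing on $K_0$, associativity, and commutativity.

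For well-definedness, the key observation is that $Q[\Sigma_{i+j}]$ is free as a right $Q[\Sigma_{i,j}]$-module; choosing a set of coset representatives for $\Sigma_{i,j} \backslash \Sigma_{i+j}$ (of size $\binom{i+j}{i}$) exhibits such a basis, so the functor $Q[\Sigma_{i+j}] \otimes_{Q[\Sigma_{i,j}]}(-)$ is exact. Next, since both $X$ and $Y$ are bounded complexes of finitely generated projective $Q$-modules, so is $X \otimes_Q Y$, and its homology is supported on $Z$. Because each $Y_n$ is $Q$-flat, the functor $(-)\otimes_Q Y$ preserves both short exact sequences and quasi-isomorphisms in the first variable; similarly for the second variable. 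Combining these two facts, the bifunctor $(X,Y)\mapsto Q[\Sigma_{i+j}]\otimes_{Q[\Sigma_{i,j}]} X\otimes_Q Y$ preserves short exact sequences and quasi-isomorphisms in each variable separately. By the universal property described in Remark \ref{remK0andP}, applied once in each variable, the bifunctor descends to a well-defined bilinear pairing on the Grothendieck groups.

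For associativity, I would exhibit a natural $G \times \Sigma_{i+j+k}$-equivariant isomorphism
$$
Q[\Sigma_{i+j+k}] \otimes_{Q[\Sigma_{i+j,k}]} \bigl(Q[\Sigma_{i+j}] \otimes_{Q[\Sigma_{i,j}]} X \otimes_Q Y\bigr) \otimes_Q Z \; \cong \; Q[\Sigma_{i+j+k}] \otimes_{Q[\Sigma_{i,j,k}]} X \otimes_Q Y \otimes_Q Z,
$$
and an analogous isomorphism starting from the other bracketing, where $\Sigma_{i,j,k}$ denotes the block-stabilizing subgroup $\Sigma_i \times \Sigma_j \times \Sigma_k$. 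Both isomorphisms come from transitivity of induction along the chains of subgroups $\Sigma_{i,j,k} \subseteq \Sigma_{i+j,k} \subseteq \Sigma_{i+j+k}$ and $\Sigma_{i,j,k} \subseteq \Sigma_{i,j+k} \subseteq \Sigma_{i+j+k}$, combined with the associator for the $Q$-linear tensor product of complexes. Composing these two isomorphisms yields the desired equality at the level of $K_0^Z(Q; G \times \Sigma_{i+j+k})$.

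For commutativity, the Koszul-signed swap $\tau \colon X \otimes_Q Y \to Y \otimes_Q X$, $x \otimes y \mapsto (-1)^{|x||y|} y \otimes x$, is an isomorphism of complexes in $\cP^Z(Q; G)$. Let $w \in \Sigma_{i+j}$ be the ``block transposition'' that moves the last $j$ positions in front of the first $i$; conjugation by $w$ carries $\Sigma_{i,j}$ to $\Sigma_{j,i}$. Then the assignment $g \otimes (x \otimes y) \mapsto gw^{-1} \otimes \tau(x \otimes y)$ defines a $G \times \Sigma_{i+j}$-equivariant isomorphism
$$
Q[\Sigma_{i+j}] \otimes_{Q[\Sigma_{i,j}]} X \otimes_Q Y \; \cong \; Q[\Sigma_{i+j}] \otimes_{Q[\Sigma_{j,i}]} Y \otimes_Q X,
$$
giving the identity $a \star_{i,j} b = b \star_{j,i} a$ in $K_0$. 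The main obstacle is a clean verification that the Koszul signs in $\tau$ are compatible with the signed $\Sigma_n$-action on $T^n$ built into the construction, and that the conjugation by $w$ correctly matches the two induced module structures; this is a direct but slightly fiddly check, which I would reduce to the case of adjacent block transpositions to keep bookkeeping minimal.
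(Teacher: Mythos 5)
Your proposal is correct and follows essentially the same route as the paper: flatness (indeed freeness) of $Q[\Sigma_{i+j}]$ over $Q[\Sigma_{i,j}]$ for descent to $K_0$, transitivity of induction for associativity, and conjugation by the block rotation $(1\,2\,\cdots\,i+j)^j$ together with the swap of tensor factors for commutativity. The only difference is cosmetic: the paper writes the associativity isomorphism directly between the two bracketings rather than via $\Sigma_{i,j,k}$, and your explicit insistence on the Koszul sign in the swap is a correct refinement of the formula as printed there.
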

\begin{rem}
This ``star pairing'' is related to pairings considered by 
Atiyah \cite[\S 1]{Ati66} and Knutson \cite[p. 127]{Knu73}. 
See the discussion in \S \ref{agreement}.
\end{rem}

\begin{proof}[Proof of Lemma \ref{lemstar}]  
Note that $Q[\Sigma_{i+j}]$ is a flat $Q[\Sigma_{i,j}]$-module, and hence this functor preserves short exact sequences and quasi-isomorphisms in each
argument. It thus induces a bilinear pairing on Grothendieck groups as indicated. 

Associativity holds since there is an isomorphism in
$\mathcal{P}^Z(Q; \Sigma_{i+j+k} \times G)$ from
$$Q[\Sigma_{i+j+k}] \otimes_{Q[\Sigma_{i+j,k}]}(Q[\Sigma_{i+j}]
\otimes_{Q[\Sigma_{i,j}]} X \otimes_Q Y) \otimes_Q Z$$ 
to 
$$Q[\Sigma_{i+j+k}] \otimes_{Q[\Sigma_{i,j+k}]} X \otimes_Q (Q[\Sigma_{j+k}]
\otimes_{Q[\Sigma_{j,k}]} Y \otimes_Q Z)$$
given by
$$\sigma \otimes \omega \otimes x \otimes y \otimes z \mapsto
\sigma \omega \otimes x \otimes 1 \otimes y \otimes z.$$

As for commutativity, let $\tau:=(1 \,
2 \, \cdots \, i+j)^j \in \Sigma_{i+j}$, and let $h$ denote the
automorphism of $\Sigma_{i+j}$ given by $\sigma \mapsto \tau \sigma \tau^{-1}$. Notice
that $h$ restricts to an isomorphism
$$\Sigma_{i,j} \xrightarrow{\cong} \Sigma_{j,i},$$
and, moreover, this isomorphism coincides with the map
given by the composition of evident isomorphisms
$$\Sigma_{i,j} \xrightarrow{\cong} \Sigma_i \times \Sigma_j \xrightarrow{\cong} \Sigma_j \times
\Sigma_i \xrightarrow{\cong} \Sigma_{j,i}.$$

It follows that one has an
isomorphism in $\mathcal{P}^Z(Q; \Sigma_{i+j} \times G)$
$$Q[\Sigma_{i+j}] \otimes_{Q[\Sigma_{i,j}]} X \otimes_Q Y \xrightarrow{\cong}
Q[\Sigma_{i+j}] \otimes_{Q[\Sigma_{j,i}]} Y \otimes_Q X$$
that sends elements of the form $\sigma \otimes x \otimes y$, where
$\sigma \in \Sigma_{i+j}$, to $\sigma \tau^{-1} \otimes y \otimes x$.
\end{proof}

\begin{lem} \label{lem712a}
Given a short exact sequence $0 \to X' \to X \to X'' \to 0$ in $\cP^Z(Q; G)$, for any $n \geq 1$ there is a filtration 
$$
0 = F_{-1} \subseteq F_0 \subseteq F_1  \subseteq \cdots \subseteq F_n = T^n(X)
$$
in $\cP^Z(Q; \Sigma_n \times G)$ such that 
$$
F_i/F_{i-1} \cong Q[\Sigma_n] \otimes_{Q[\Sigma_{n-i, i}]} T^{n-i}(X') \otimes_Q T^i(X'').
$$
Consequently, in $K_0^Z(Q; \Sigma_p \times G)$ we have
$$
[T^n(X)] = \sum_i [T^{n-i}(X')] \star_{n-i,i} [T^i(X'')].
$$
\end{lem}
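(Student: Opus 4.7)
The plan is to construct the filtration by subsets of $\{1,\dots,n\}$ recording how many tensor factors have ``passed'' to the quotient $X''$. Write $\iota \colon X' \hookrightarrow X$ for the given inclusion. For each subset $S \subseteq \{1,\dots,n\}$, let
$$
X_S := \bigotimes_{k=1}^n Y_k, \qquad Y_k = \begin{cases} X & k \in S, \\ X' & k \notin S, \end{cases}
$$
viewed as a subcomplex of $T^n(X)$ via $\iota$ in the coordinates outside $S$. Since each $X_k$ is a complex of projectives, the exact sequence remains exact after tensoring, so these maps are genuine inclusions of subcomplexes of $T^n(X)$. Define
$$
F_i := \sum_{|S| \le i} X_S \subseteq T^n(X).
$$
The first step is to check that $F_i$ is stable under the diagonal $G$-action (obvious, since the $G$-action preserves the inclusion $\iota$) and under the signed $\Sigma_n$-action (obvious, since $\sigma \cdot X_S = X_{\sigma(S)}$, up to sign). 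Clearly $F_{-1}=0$ and $F_n = T^n(X)$.

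Next I would identify the subquotients. For $|S| = i$, the intersection $X_S \cap F_{i-1}$ is the sub-sum $\sum_{s \in S} X_{S \setminus \{s\}}$: a tensor in $X_S$ lies in $F_{i-1}$ precisely when at least one of its ``$X$-factors'' (at positions in $S$) is actually in the image of $\iota$. Using again that tensoring the short exact sequence $0 \to X' \to X \to X'' \to 0$ with any tensor product of projectives is exact, the cokernel at position $s$ becomes $X''$, yielding
$$
X_S / (X_S \cap F_{i-1}) \;\cong\; \bigotimes_{k \notin S} X' \; \otimes_Q \bigotimes_{k \in S} X''.
$$
Moreover, for two distinct subsets $S,S'$ of size $i$, the intersection $X_S \cap X_{S'}$ lies in $F_{i-1}$ (since it sits inside $X_{S \cap S'}$ with $|S \cap S'| < i$). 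Therefore
$$
F_i / F_{i-1} \;\cong\; \bigoplus_{|S|=i} \bigotimes_{k \notin S} X' \otimes_Q \bigotimes_{k \in S} X''.
$$

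The third step is to identify this sum with the induced module. The group $\Sigma_n$ permutes the summands transitively according to its action on $i$-element subsets of $\{1,\dots,n\}$. The stabilizer of the distinguished subset $S_0 = \{n-i+1,\dots,n\}$ is exactly $\Sigma_{n-i,i}$, acting on the corresponding summand as the signed permutation action on $T^{n-i}(X') \otimes_Q T^i(X'')$. Hence
$$
F_i / F_{i-1} \;\cong\; Q[\Sigma_n] \otimes_{Q[\Sigma_{n-i,i}]} T^{n-i}(X') \otimes_Q T^i(X''),
$$
compatibly with the diagonal $G$-action. Finally, the short exact sequences
$$
0 \to F_{i-1} \to F_i \to F_i/F_{i-1} \to 0
$$
in $\cP^Z(Q; \Sigma_n \times G)$ imply the displayed identity in $K_0^Z(Q;\Sigma_n \times G)$ by telescoping and applying the definition of $\star_{n-i,i}$.

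The main obstacle is the combinatorial bookkeeping in step two: verifying that $X_S \cap F_{i-1}$ equals $\sum_{s \in S} X_{S\setminus\{s\}}$ on the nose, and that the pairwise intersections $X_S \cap X_{S'}$ for $|S|=|S'|=i$ do not contribute, so that the subquotient splits as a direct sum over $i$-subsets. Both reduce to the exactness of the functor $(-) \otimes_Q X_1 \otimes_Q \cdots \otimes_Q X_{n-1}$ on the given short exact sequence, which is automatic because each $X_j$ is a bounded complex of projectives.
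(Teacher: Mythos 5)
Your filtration is the same as the paper's: $F_i=\sum_{|S|\le i}X_S$ is precisely the $\Sigma_n$-span of the image of $T^{n-i}(X')\otimes_Q T^i(X)$ in $T^n(X)$, so your first and third steps (equivariance of the $F_i$, and the identification of a transitive sum over $i$-subsets with stabilizer $\Sigma_{n-i,i}$ as the induced module) agree with the paper. Where you diverge is the identification of the subquotients, and there your argument as written has a real gap. The paper produces the natural $(\Sigma_n\times G)$-equivariant surjection $Q[\Sigma_n]\otimes_{Q[\Sigma_{n-i,i}]}T^{n-i}(X')\otimes_Q T^i(X'')\onto F_i/F_{i-1}$ from a right-exact sequence and proves it is injective by a total-rank count (after reducing to $\Spec(Q)$ connected), which sidesteps all questions about how the various $X_S$ intersect. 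You instead assert the lattice identities $X_S\cap F_{i-1}=\sum_{s\in S}X_{S\setminus\{s\}}$ and $X_S\cap X_{S'}\subseteq F_{i-1}$, justified by an element-level remark about pure tensors and by ``exactness of $-\otimes_Q X_1\otimes\cdots\otimes X_{n-1}$.'' Exactness of tensoring with bounded complexes of projectives gives the injectivity of $X_{S'}\into X_S$ for $S'\subseteq S$ and the cokernel computations, but it does not by itself yield intersection-distributes-over-sum statements inside $T^n(X)$ (elements of an intersection need not be pure tensors). Moreover, to get the direct-sum decomposition of $F_i/F_{i-1}$ you need strictly more than the two claims you state: you need that whenever $\sum_{|S|=i}a_S\in F_{i-1}$ with $a_S\in X_S$, each individual $a_S$ lies in $F_{i-1}$, i.e.\ $X_S\cap\bigl(F_{i-1}+\sum_{S'\ne S,\,|S'|=i}X_{S'}\bigr)\subseteq F_{i-1}$, and this does not follow formally from pairwise statements.

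The repair is short and standard. Since each $X''_j$ is projective, choose degreewise $Q$-module splittings $X_j\cong X'_j\oplus C_j$ with $C_j\cong X''_j$ (these are neither chain maps nor $G$-equivariant, which is fine, because you use them only to verify injectivity of the natural, equivariant surjection). In each degree this gives $T^n(X)=\bigoplus_{T\subseteq\{1,\dots,n\}}M_T$, where $M_T$ has $C$'s in the positions of $T$ and $X'$'s elsewhere, and every $X_S=\bigoplus_{T\subseteq S}M_T$ is a sum of these fixed summands; all of your intersection and sum identities, and hence the splitting of $F_i/F_{i-1}$ as a direct sum over $i$-subsets with the expected differential, follow at once. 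Alternatively you can simply adopt the paper's rank-counting finesse. With either repair, the rest of your plan — the induced-module identification and the telescoping in $K_0^Z(Q;\Sigma_n\times G)$ — goes through as you describe.
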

  
\begin{proof}
We identify $X'$ as a subcomplex of $X$. Define $F_i$ as the image of
$$
\alpha_i: Q[\Sigma_n] \otimes_Q T^{n-i}(X') \otimes_Q T^i(X) 
\to
T^n(X)
$$
sending $\sigma \otimes x_1 \otimes \cdots \otimes x_n$ to $\sigma(x_1 \otimes \cdots \otimes x_n)$, where $x_1, \dots, x_{n-i} \in X'$.
In other words, $F_i$ is the closure under the action of $\Sigma_n$ of the image of the canonical map 
$T^{n-i}(X') \otimes_Q T^i(X) \to T^n(X)$.

The map $\alpha_i$ factors as
$$
Q[\Sigma_n] \otimes_Q 
T^{n-i}(X') \otimes_Q T^i(X) 
\onto
Q[\Sigma_n] \otimes_{Q[\Sigma_{n-i,i}]} 
T^{n-i}(X') \otimes_Q T^i(X) 
\xra{\overline{\a}_i}
T^n(X).
$$
Also, the restriction of $\alpha_i$ to the subcomplex
$Q[\Sigma_n] \otimes_Q T^{n-i+1}(X') \otimes_Q T^{i-1}(X)$ coincides with $\alpha_{i-1}$.
We have a right exact sequence
$$
\begin{aligned}
Q[\Sigma_n] \otimes_Q & T^{n-i+1}(X') \otimes_Q T^{i-1}(X) 
\to
Q[\Sigma_n]  \otimes_{Q[\Sigma_{n-i,i}]} 
T^{n-i}(X') \otimes_Q T^i(X) \\
& \to
Q[\Sigma_n] 
\otimes_{Q[\Sigma_{n-i,i}]} 
T^{n-i}(X') \otimes_Q T^i(X'') \to 0.
\end{aligned}
$$
These facts imply the existence of a surjective map
\begin{equation} \label{E720}
Q[\Sigma_n] \otimes_{Q[\Sigma_{n-i,i}]}  T^{n-i}(X') \otimes_Q T^i(X'') \onto
F_i/F_{i-1}
\end{equation}
and it remains to prove it is injective too. 

We may assume $\Spec(Q)$ is connected, so that each complex 
$X', X, X''$ has well-defined total rank $r', r, r''$, respectively, where we define total rank to be 
to be the sum of the ranks of all the components of a complex.  Moreover, we have $r = r' +
r''$. Then the total  rank of
$$
Q[\Sigma_n] \otimes_{Q[\Sigma_{n-i,i}]} 
T^{n-i}(X') \otimes_Q T^i(X'') 
$$
is $((r')^{n-i} (r'')^i) {n \choose i}$. Observe that
$$
\sum_i ((r')^{n-i} (r'')^i) {n \choose i}
=
(r' + r'')^n = r^n.
$$
But the sum of the ranks of the complexes $F_i/F_{i-1}$ is also $r^n$, since they are the associated graded modules associated to a filtration of $T^n(X)$. It follows that each
map (\ref{E720}) 
must be injective too.   
\end{proof}

We define a multiplicative abelian monoid $M$ as follows. As a set, $M$ is 
$$
\{1\} \times \prod_{i=1}^\infty K_0^Z(Q; \Sigma_i \times G) z^i,
$$
the collection of power series in $z$ of the form $1 + \a_1 z + \a_2 z^2 + \cdots $ with $\a_i \in K_0^Z(Q; \Sigma_i \times G)$ for all $i$.
We define a multiplication rule on $M$ using the $\star$ pairings:
$$
\left(\sum_i \alpha_i z^i\right) \star \left(\sum_j \beta_j z^j\right)
:= \sum_{l} \sum_{i+j = l} (\alpha_i \star_{i,j} \beta_j) z^l,
$$
where by convention $\a_0 = 1$, $\b_0 = 1$, $\alpha_0 \star \beta_j = \beta_j$, 
and $\alpha_i \star \b_0 = \a_i$. 
The associative and commutative properties of $\star$ given in Lemma \ref{lemstar} imply that $(M, \star)$ is an abelian monoid.

For $X \in \cP^Z(Q;G)$, define $t(X) \in M$ by 
$$
t(X) = 1 + [X]z + [T^2(X)]z^2 + \cdots
$$
By Lemma \ref{lem712a}, $t(X) = t(X') \star t(X'')$ if $0 \to X' \to X \to X'' \to 0$ is a short exact sequence in $\cP^Z(Q; G)$. 
If $X \xra{\sim} X'$ is a quasi-isomophism in $\cP^Z(Q;G)$,  
then the induced map $T^i(X) \to T^i(X')$ is also a quasi-isomorphism for all $i$, 
and hence $t(X) = t(X')$. 
By Remark \ref{remK0andP}, we get an induced group homomorphism 
$$
t: K_0^Z(Q;G) \to U(M)
$$
landing in the group of units of $M$.

The function $t^n_\Sigma$ is defined to be the composition of $t$ with the function $U(M) \to K_0^Z(Q; \Sigma_n \times G)$ sending a power series to its $z^n$
coefficient. The first assertion of Theorem \ref{thm-t^k} 
follows, and the second is a consequence of Lemma \ref{lem712a}.

\section{Cyclic Adams operations}

We define  a ``cyclic'' Adams operation, $\cPsi^p$, on $K_0^Z(Q)$ for each prime $p$. 
The definition is motivated by an observation of Atiyah \cite[2.7]{Ati66}; 
see also Benson \cite{Ben84} and End \cite{End69}. In the case $p = 2$, the operator $\cPsi^2$ was defined and developed in unpublished work of P.\ Roberts
\cite{RobertsUnpub}, who in turn credited the idea to unpublished work of M.\ Hashimoto and M.\ Nori. Finally, as mentioned in the introduction, these operators
have also been defined and developed by Haution \cite{Haution} when $Q$ contains a field of characteristic different than $p$. \edit{Added sentence}

Throughout this section, assume $p$ is a prime and $Q$ is an $A_p$-algebra, where $A_p$ is the subring of $\C$ defined by
$$
A_p = \Z\left[\scriptstyle{\frac{1}{p}}, e^{\scriptscriptstyle{\frac{2 \pi i}{p}}}\right],
$$

Define $C_p$ to be the subgroup of $\Sigma_p$ generated 
by the $p$-cycle $\s := (1 \, 2 \, \cdots \, p)$. 
For a $p$-th root of unity $\zeta$ (including the case $\zeta = 1$), let $Q_{\zeta}$ 
denote the $Q[C_p]$-module $Q$ equipped with the $C_p$-action $\s q = \zeta q$. 
Since $Q$ is an $A_p$-algebra, we have $Q[C_p] \cong \bigoplus_{\zeta} Q_{\zeta}$ as $Q[C_p]$-modules.

For $Y \in \cP^Z(Q; C_p)$, define
$$
Y^{(\zeta)} := \Hom_{Q[C_p]}(Q_\zeta, Y) = 
\ker(\s - \zeta: Y \to Y).
$$
Since $Q_\zeta$ is a projective $Q[C_p]$-module, $Y \mapsto Y^{(\zeta)}$ is an exact functor, and hence it induces a map
$$
\phi^p_\zeta: K_0^Z(Q; C_p) \to K_0^Z(Q). 
$$

\begin{prop} 
Assume $Q$ is an $A_p$-algebra.  For each $p$-th root of unity $\zeta$, there is a function
$$
t^p_\zeta: K_0^Z(Q) \to K_0^Z(Q)
$$
with $t^p_\zeta([X]) = [T^p(X)^{(\zeta)}]$.
\end{prop}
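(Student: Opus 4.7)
The plan is to exhibit $t^p_\zeta$ as a three-step composition of maps already at our disposal. First, apply Theorem \ref{thm-t^k} with $G$ the trivial group and $n=p$ to obtain
$$
t^p_\Sigma \colon K_0^Z(Q) \to K_0^Z(Q;\Sigma_p), \qquad [X] \mapsto [T^p(X)].
$$
Second, since $C_p \subseteq \Sigma_p$, restriction of group actions gives an exact forgetful functor $\cP^Z(Q;\Sigma_p) \to \cP^Z(Q;C_p)$, and hence a group homomorphism
$$
\res^{\Sigma_p}_{C_p} \colon K_0^Z(Q;\Sigma_p) \to K_0^Z(Q;C_p).
$$
Third, compose with the map $\phi^p_\zeta \colon K_0^Z(Q;C_p) \to K_0^Z(Q)$ constructed immediately before the statement of the proposition; this map is available precisely because $Q$ is assumed to be an $A_p$-algebra, which gives the splitting $Q[C_p] \cong \bigoplus_\zeta Q_\zeta$, making each $Q_\zeta$ a projective $Q[C_p]$-module and hence $\Hom_{Q[C_p]}(Q_\zeta,-)$ an exact functor.

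Define $t^p_\zeta := \phi^p_\zeta \circ \res^{\Sigma_p}_{C_p} \circ t^p_\Sigma$. To verify the stated formula, trace a class $[X] \in K_0^Z(Q)$ through these three maps: it first becomes the class of $T^p(X)$ with its signed $\Sigma_p$-action, then the class of the same complex with the action restricted along $C_p \hookrightarrow \Sigma_p$, and finally the class of the $\zeta$-eigenspace $T^p(X)^{(\zeta)} = \ker(\sigma-\zeta)$. Hence $t^p_\zeta([X]) = [T^p(X)^{(\zeta)}]$, as desired.

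There is essentially no hard step here: all the substantive content has been absorbed into Theorem \ref{thm-t^k}, whose role is precisely to handle the subtle point that the tensor power functor $T^p$ is not additive on $\cP^Z(Q)$ and must be extended to $K_0^Z(Q)$ via the filtration of Lemma \ref{lem712a}. The two remaining constituents of the composition, restriction of equivariance and passage to $\zeta$-eigenspaces, are exact at the level of complexes and therefore descend to Grothendieck groups without further argument.
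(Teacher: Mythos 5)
Your construction is exactly the paper's: define $t^p_\zeta$ as the composition $\phi^p_\zeta \circ \res \circ t^p_\Sigma$, with $t^p_\Sigma$ supplied by Theorem \ref{thm-t^k} and the exactness of restriction and of $\Hom_{Q[C_p]}(Q_\zeta,-)$ (the latter from the splitting $Q[C_p]\cong\bigoplus_\zeta Q_\zeta$ over an $A_p$-algebra) handling the descent to Grothendieck groups. The argument is correct and matches the paper's proof in all essentials.
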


\begin{proof} 
Restriction along the inclusion $C_p \into \Sigma_p$ determines a map
$$
K_0^Z(Q; \Sigma_p) \xra{\res} K_0^Z(Q; C_p).
$$
We define $t^p_\zeta$ to be the composition of
$$
K_0^Z(Q) \xra{t^p_\Sigma} K_0^Z(Q; \Sigma_p) \xra{\res} K_0^Z(Q; C_p)  \xra{\phi^p_\zeta} K_0^Z(Q).
$$
\end{proof}

\begin{defn}
For an $A_p$-algebra $Q$ and closed subset $Z$ of $\Spec(Q)$, define the function
$$
\cPsi^p: K_0^Z(Q) \to K_0^Z(Q) \otimes_\Z \Z[e^{\frac{2 \pi i}{p}}]
$$
by 
$$
\cPsi^p := \sum_{\zeta}  \zeta t^p_\zeta
$$
where the sum is taken over all $p$-th roots of unity $\zeta$. Thus for $X \in \cP^Z(Q)$, 
$$
\cPsi^p([X]) = \sum_{\zeta} \zeta [T^p(X)^{(\zeta)}].
$$
\end{defn}

In view of the following lemma, the map $\cPsi^p$  is independent of the generator chosen for $C_p$.  
The lemma is proven in \cite[II.3.6]{Haution}, but we include the details here. \edit{Added citation.}

\begin{lem} \label{lem:indofzeta}
Assume $Q$ is an $A_p$-algebra. 
If $\zeta$ and $\zeta'$ are both primitive $p$-th roots of unity, then
$$
[T^p(X)^{(\zeta)}] = [T^p(X)^{(\zeta')}] \in K_0^Z(Q)
$$
for all $X \in \cP^Z(Q)$. 
\end{lem}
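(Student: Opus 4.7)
The plan is to exhibit, for each pair of primitive $p$-th roots of unity $\zeta, \zeta'$, a $Q$-linear isomorphism of chain complexes
$$T^p(X)^{(\zeta)} \xrightarrow{\;\cong\;} T^p(X)^{(\zeta')},$$
which immediately yields equality of classes in $K_0^Z(Q)$. Since both roots are primitive we may write $\zeta' = \zeta^k$ for some integer $k$ with $\gcd(k,p)=1$. The isomorphism will be realized by the action of a suitable element $\tau \in \Sigma_p$ on $T^p(X)$ (which, as part of the $\Sigma_p$-action constructed in Section~\ref{sectwo}, is a $Q$-linear chain automorphism).

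The key point I would use is that any two $p$-cycles in $\Sigma_p$ are conjugate, so that there exists $\tau \in \Sigma_p$ satisfying
$$\tau^{-1} \sigma \tau = \sigma^k.$$
Such a $\tau$ exists because $\sigma^k$ is again a $p$-cycle (as $\gcd(k,p) = 1$), and all $p$-cycles form a single conjugacy class in $\Sigma_p$. Given this commutation relation, for any $y \in T^p(X)^{(\zeta)}$ (i.e. $\sigma y = \zeta y$) one computes
$$\sigma(\tau y) = \tau(\tau^{-1}\sigma\tau) y = \tau \sigma^k y = \tau(\zeta^k y) = \zeta' (\tau y),$$
so $\tau$ carries $T^p(X)^{(\zeta)}$ into $T^p(X)^{(\zeta')}$. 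The same argument, applied to $\tau^{-1}$ (which satisfies $\tau \sigma \tau^{-1} = \sigma^{k^{-1}}$ where $k^{-1}$ is an inverse of $k$ mod $p$), shows $\tau^{-1}$ carries $T^p(X)^{(\zeta')}$ back into $T^p(X)^{(\zeta)}$. Hence $\tau$ and $\tau^{-1}$ are mutual inverses between the two eigenspaces.

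There is no serious obstacle here: the only item requiring care is bookkeeping of the direction of conjugation (since $\tau$ is \emph{not} $C_p$-equivariant, only an intertwiner between two $C_p$-actions on $T^p(X)$). But because we only need an isomorphism in $\cP^Z(Q)$ rather than in $\cP^Z(Q;C_p)$, the $Q$-linear chain isomorphism furnished by $\tau$ is all that is required, and we get $[T^p(X)^{(\zeta)}] = [T^p(X)^{(\zeta')}]$ in $K_0^Z(Q)$.
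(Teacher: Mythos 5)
Your proof is correct and is essentially the paper's own argument: both rest on the fact that $\sigma$ and $\sigma^{k}$ are conjugate $p$-cycles in $\Sigma_p$, so a conjugating element $\tau$, acting through the $\Sigma_p$-action on $T^p(X)$ by chain automorphisms, restricts to a $Q$-linear chain isomorphism from the $\zeta$-eigenspace onto the $\zeta'$-eigenspace. The paper simply packages the same computation functorially, as an isomorphism between the two restrictions $\res(Y)$ and $\res'(Y)$ along $C_p \subseteq \Sigma_p$ applied to $Y = T^p(X)$.
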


\begin{proof} 
We show $\res(Y)^{(\zeta)}$ and  $\res(Y)^{(\zeta')}$ are isomorphic 
objects of $\cP^Z(Q)$ for any $Y \in \cP^Z(Q; \Sigma_p)$, where $\res:
  \cP^Z(Q; \Sigma_p) \to \cP^Z(Q; C_p)$ is the restriction functor.  

Note that $\zeta' = \zeta^i$ for some $1 \leq i \leq
  p-1$. Let $\tau \in \Sigma_p$ be a permutation such that $\tau^{-1} \sigma \tau = \sigma^i$. (Recall $\sigma = (1 \, 2 \, \cdots \, p)$.) Then $\tau$
  determines an isomorphism from $\res(Y)$ to $\res'(Y)$, where $\res'$ is restriction along $C_p \xra{\s \mapsto \s^i} C_p \subseteq \Sigma_p$. We have
  $\res'(Y)^{(\zeta)} = \res(Y)^{(\zeta^i)}$.
\end{proof}

\begin{rem}
More generally, for any integer $n \geq 1$,
$[T^n(X)^{(\zeta)}] = [T^n(X)^{(\zeta')}]$ holds in $K_0^Z(Q)$, 
as long as $\zeta$ and $\zeta'$ are $n$-th roots of unity of the same order. 
\end{rem}

Since $\displaystyle{\sum_{\zeta \ne 1} \zeta = -1}$, we deduce from the Lemma:

\begin{cor}\label{psi-simplified} 
Assume $Q$ is an $A_p$-algebra and let $\zeta$ be a primitive $p$-th root of unity.
We have 
$$
\cPsi^p([X]) = [T^p(X)^{(1)}] - [T^p(X)^{(\zeta)}].
$$ 
\end{cor}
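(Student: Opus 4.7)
The plan is to derive the corollary directly from the definition of $\cPsi^p$ by grouping the sum according to whether the root of unity is trivial or primitive, and then invoking Lemma \ref{lem:indofzeta} together with the identity $\sum_{\zeta' \neq 1} \zeta' = -1$ (which holds since the sum of all $p$-th roots of unity vanishes because $p$ is prime, so every nontrivial $p$-th root of unity is primitive).

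First, I would start from the defining formula
$$
\cPsi^p([X]) = \sum_{\zeta'} \zeta' [T^p(X)^{(\zeta')}],
$$
where the sum runs over all $p$-th roots of unity in $A_p$. I would split off the contribution of $\zeta' = 1$, leaving a sum indexed by the primitive $p$-th roots of unity (since $p$ is prime, every nontrivial $p$-th root is primitive).

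Next, fixing a primitive $p$-th root of unity $\zeta$, I would apply Lemma \ref{lem:indofzeta} to conclude that $[T^p(X)^{(\zeta')}] = [T^p(X)^{(\zeta)}]$ in $K_0^Z(Q)$ for every primitive $p$-th root of unity $\zeta'$. This allows the class $[T^p(X)^{(\zeta)}]$ to be factored out of the sum over nontrivial roots, producing
$$
\cPsi^p([X]) = [T^p(X)^{(1)}] + \left(\sum_{\zeta' \neq 1} \zeta'\right) [T^p(X)^{(\zeta)}].
$$

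Finally, I would plug in the identity $\sum_{\zeta' \neq 1} \zeta' = -1$ noted just before the statement of the corollary, obtaining the desired expression $[T^p(X)^{(1)}] - [T^p(X)^{(\zeta)}]$. There is no real obstacle here; the content of the statement is entirely carried by Lemma \ref{lem:indofzeta}, and the corollary is a bookkeeping consequence once that lemma is in hand.
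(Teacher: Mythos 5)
Your proposal is correct and follows exactly the argument the paper intends: split the defining sum $\sum_{\zeta'} \zeta' [T^p(X)^{(\zeta')}]$ at $\zeta'=1$, use Lemma \ref{lem:indofzeta} to identify all the classes indexed by primitive roots, and apply $\sum_{\zeta' \neq 1}\zeta' = -1$. This is the same (one-line) deduction given in the paper.
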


The corollary shows, in particular, that $\cPsi^p$ takes values in 
$K_0^Z(Q)$ viewed as a subgroup of $K_0^Z(Q) \otimes_\Z \Z[e^{\frac{2 \pi i}{p}}]$, and we will henceforth view $\cPsi^p$ as a function of the form
\begin{equation} \label{E930}
\cPsi^p: K_0^Z(Q) \to K_0^Z(Q).
\end{equation}

\begin{thm} 
\label{psi-GS-axioms}
Fix a prime $p$. 
The operation $\cPsi^p$ satisfies the Gillet-Soul\'e axioms of being an ``Adams operation of degree $p$'' on the category of commutative, Noetherian $A_p$-algebras.
That is, letting $Q$ and $R$ be commutative, Noetherian $A_p$-algebras, we have:
\begin{enumerate}
\item $\cPsi^p$ is a group endomorphism of $K_0^Z(Q)$ for all
closed subsets $Z$ of $\Spec(Q)$. 

\item Given
$\a \in K_0^Z(Q)$ and $\b \in K_0^W(Q)$ for closed subsets $Z$ and  $W$ of $\Spec(Q)$,  we have
$$
\cPsi^p(\a \cup \b) = \cPsi^p(\a) \cup \cPsi^p(\b) \in K_0^{Z \cap W}(Q),
$$
where $- \cup -$ is the pairing determined by tensor product over $Q$.

\item 
Given a morphism of affine schemes 
$$
\phi: \Spec(R) \to \Spec(Q)
$$ 
over $\Spec(A_p)$ and given closed subsets 
$W \subseteq \Spec(R)$ and $Z \subseteq \Spec(Q)$ such that $\phi^{-1}(Z) \subseteq W$, 
we have an equality 
$$
\cPsi^p \circ \phi^* = \phi^* \circ \cPsi^p
$$
of maps $K_0^Z(Q) \to K_0^W(R)$. 

\item 
If $a = (a_1, \dots, a_n)$ is any sequence of elements in $Q$ 
and $K(a)$ is the associated Koszul complex,
viewed as an object of $\cP^{V(a_1, \dots, a_n)}(Q)$, we have
$$
\cPsi^p([K(a)]) = p^n[K(a)] \in K_0^{V(a_1, \dots, a_n)}(Q).
$$
\end{enumerate}
\end{thm}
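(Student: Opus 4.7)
The plan is to verify the four axioms in order, with most of the effort devoted to (1) and (4). For (1), the content is additivity on short exact sequences (well-definedness on $K_0^Z(Q)$ being built into the construction of the functions $t^p_\mu$). Applying Theorem~\ref{thm-t^k} to a short exact sequence $0 \to X' \to X \to X'' \to 0$ in $\cP^Z(Q)$ yields
\[
t^p_{\Sigma}([X]) = \sum_{i=0}^{p}\bigl[\,Q[\Sigma_p]\otimes_{Q[\Sigma_{i,p-i}]} T^i(X') \otimes_Q T^{p-i}(X'')\,\bigr]
\]
in $K_0^Z(Q; \Sigma_p)$; restricting to $C_p$ and applying $\sum_\mu \mu\,\phi^p_\mu$ produces $\cPsi^p([X])$ on the left. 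The $i = 0$ and $i = p$ terms contribute $\cPsi^p([X''])$ and $\cPsi^p([X'])$ respectively. The key observation is that for $0 < i < p$, the coset set $\Sigma_p/\Sigma_{i, p-i}$ has cardinality $\binom{p}{i}$, divisible by $p$, and is a free left $C_p$-set (since $\Sigma_{i, p-i}$ contains no $p$-cycle); choosing coset representatives along $C_p$-orbits identifies $\res_{C_p}\bigl(Q[\Sigma_p]\otimes_{Q[\Sigma_{i, p-i}]} M\bigr)$ with $\binom{p}{i}/p$ copies of $Q[C_p]\otimes_Q M$, whose $\mu$-eigenspace is the same copy of $M$ for every $\mu$. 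Hence $\sum_\mu \mu = 0$ annihilates the middle contributions, leaving $\cPsi^p([X]) = \cPsi^p([X']) + \cPsi^p([X''])$.

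For (2), the essential input is a $C_p$-equivariant isomorphism $T^p(X \otimes_Q Y) \cong T^p(X)\otimes_Q T^p(Y)$ obtained by a Koszul-signed shuffle of tensor factors, where the right-hand side carries the diagonal $C_p$-action. The eigenspace decomposition of a diagonal action reads $(T^p(X)\otimes T^p(Y))^{(\mu)} = \bigoplus_{\zeta\zeta' = \mu} T^p(X)^{(\zeta)}\otimes T^p(Y)^{(\zeta')}$; substituting into the defining formula and factoring the double sum yields $\cPsi^p(\alpha\cup\beta) = \cPsi^p(\alpha)\cup\cPsi^p(\beta)$. For (3), both $T^p$ and $(-)^{(\mu)}$ commute with the base change $-\otimes_Q R$ (the latter since the idempotent $e_\mu = \frac{1}{p}\sum_s\mu^{-s}\sigma^s$ is $Q$-linear), so $T^p(\phi^*X)^{(\mu)} \cong \phi^*\bigl(T^p(X)^{(\mu)}\bigr)$ and the claim is immediate.

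Axiom (4) is the heart of the matter. Multiplicativity (2) reduces it to the single-element case, since $K(a_1,\dots,a_n) = K(a_1)\otimes\cdots\otimes K(a_n)$. For $a \in Q$, I view $T^p(K(a))$ as the Koszul DG $Q$-algebra $\Lambda^\bullet(Qx_1\oplus\cdots\oplus Qx_p)$ with $dx_i = a$, on which $C_p$ acts by the cyclic permutation $\sigma x_i = x_{i-1}$. The change of variables $y_1 := x_1 + \cdots + x_p$ and $y_j := x_1 - x_j$ for $j \geq 2$ has associated matrix of determinant $\pm p$ and is therefore invertible over $Q$; under it, $dy_1 = pa$ and $dy_j = 0$ for $j \geq 2$. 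Hence as DG $Q$-algebras $T^p(K(a)) \cong K(pa)\otimes_Q\Lambda^\bullet V$, where $V := Qy_2\oplus\cdots\oplus Qy_p$.

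The $C_p$-action transports through the change: $\sigma$ fixes $y_1$ (making $K(pa)$ a $C_p$-trivial factor) and acts on $V$ as an operator $M$ of order $p$ with no nonzero fixed vector, since the full fixed subspace of $\sigma$ on $Qx_1\oplus\cdots\oplus Qx_p$ is exactly $Qy_1$. Since $p$ is invertible in $Q$ and $Q$ contains the $p$-th roots of unity, $M$ is diagonalizable with characteristic polynomial $\Phi_p(\lambda)$, so $V = \bigoplus_\zeta Qw_\zeta$ with $\sigma w_\zeta = \zeta w_\zeta$, one summand for each primitive $p$-th root of unity $\zeta$. Using $K(pa) \cong K(a)$ (rescale by the unit $p$), one then obtains $T^p(K(a))^{(\mu)} \cong K(a)\otimes_Q(\Lambda^\bullet V)^{(\mu)}$, whence $[T^p(K(a))^{(\mu)}] = \chi\bigl((\Lambda^\bullet V)^{(\mu)}\bigr) \cdot [K(a)]$ in $K_0^{V(a)}(Q)$. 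A standard character-theoretic computation via $\sum_k (-1)^k \operatorname{tr}(\sigma^s \mid \Lambda^k V) = \det(I - \sigma^s \mid V)$, together with the observation that $\det(I - \sigma^s \mid V) = \Phi_p(1) = p$ for $s = 1,\dots,p-1$ (as $\sigma^s$ still has the primitive roots as its eigenvalues on $V$) and vanishes for $s = 0$, yields $\chi((\Lambda^\bullet V)^{(1)}) = p-1$ and $\chi((\Lambda^\bullet V)^{(\zeta)}) = -1$ for $\zeta$ primitive. Corollary~\ref{psi-simplified} then gives $\cPsi^p([K(a)]) = (p-1)[K(a)] - (-[K(a)]) = p[K(a)]$. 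The main obstacle is this entire single-element analysis---identifying $T^p(K(a))$ with the appropriate Koszul DG algebra, diagonalizing the differential via the change of variables, and carrying out the character computation---where essentially all of the real difficulty lies.
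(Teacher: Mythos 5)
Your proposal is correct and follows essentially the same route as the paper: additivity via the observation that for $0<i<p$ the terms $Q[\Sigma_p]\otimes_{Q[\Sigma_{i,p-i}]}T^i(X')\otimes_Q T^{p-i}(X'')$ restrict to extended $Q[C_p]$-modules (your free-$C_p$-action-on-cosets argument is the paper's double-coset argument) and are annihilated by $\sum_\zeta \zeta = 0$; multiplicativity via the $\Sigma_p$-equivariant isomorphism $T^p(X\otimes_Q Y)\cong T^p(X)\otimes_Q T^p(Y)$ together with the eigenspace decomposition of a diagonal $C_p$-action; base-change compatibility as an immediate consequence of the construction; and the Koszul axiom by reducing to $n=1$ via (2) and diagonalizing the cyclic action on the degree-one part of $T^p(K(a))$. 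The only divergence is in the bookkeeping of step (4): the paper passes directly to the eigenvector basis $v_\zeta=\frac1p\sum_i\zeta^{-i}e_i$, factors $T^p(K(a))$ as a tensor product of $p$ two-term complexes and applies the multiplicativity of $\phi^p$ factor by factor, whereas you split off the fixed line (using $K(pa)\cong K(a)$) and compute the eigenspace Euler characteristics of $\Lambda^\bullet V$ by a trace computation with $\det(I-\sigma^s\mid V)=p$ --- both arguments come down to $\prod_{\zeta\neq 1}(1-\zeta)=p$.
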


\begin{rem} The Gillet-Soul\'e axioms include non-affine schemes too,
but we won't require that level of generality. Also, their fourth axiom assumes $a$ is a regular sequence, but the property holds more generally for any such
sequence, both for our operators  and 
theirs.
\end{rem} 

\begin{rem}
Proofs of (1)--(3) of the theorem can be found in Haution's work under the additional assumption that $Q$ contains a field of characteristic different than $p$.  
Specifically \cite[II.3.8]{Haution} proves $\cPsi^p$ is additive and
\cite[II.3.10]{Haution} proves (2) and (3). We believe his proofs  apply verbatim  to the slightly
more general setting of this paper. Nevertheless, for the sake of making this paper self-contained, we will include proofs of (1)--(3). \edit{added remark}
\end{rem}

\begin{proof}
By construction, $\cPsi^p$ factors as
$$
K_0^Z(Q) \xra{t^p_\Sigma} K_0^Z(Q; \Sigma_p) \xra{\res} 
K_0^Z(Q; C_p) \xra{\phi^p} K_0^Z(Q)
$$
where  $\phi^p = \sum_\zeta \zeta \phi^p_\zeta$.
For $Y \in \cP^Z(Q; C_p)$, let us say $Y$ is {\em extended} if $Y \cong Y' \otimes_Q Q[C_p]$ for some $Y' \in \cP^Z(Q)$.

The following result may also be found in \cite[II.3.7]{Haution}.
\edit{added reference}

\begin{lem} \label{lem2}
If $Y \in \cP^Z(Q; C_p)$ is extended, then $\phi^p([Y]) = 0$.
\end{lem}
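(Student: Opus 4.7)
The plan is to compute the eigenspaces $Y^{(\zeta)}$ directly for an extended module $Y \cong Y' \otimes_Q Q[C_p]$ and observe that they are all isomorphic as $Q$-modules, so that the signed sum vanishes because $\sum_\zeta \zeta = 0$.

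First, I would use the hypothesis that $Q$ is an $A_p$-algebra, which gives the $Q[C_p]$-module decomposition $Q[C_p] \cong \bigoplus_\zeta Q_\zeta$ (where the sum runs over all $p$-th roots of unity), as noted before the statement of the Proposition defining $t^p_\zeta$. Tensoring with $Y'$ over $Q$ (with $C_p$ acting only on the second factor) yields
$$
Y \;\cong\; Y' \otimes_Q Q[C_p] \;\cong\; \bigoplus_{\zeta'} \bigl( Y' \otimes_Q Q_{\zeta'} \bigr)
$$
as objects of $\cP^Z(Q; C_p)$, where on the summand $Y' \otimes_Q Q_{\zeta'}$ the generator $\sigma$ acts as multiplication by $\zeta'$.

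Next I would compute $Y^{(\zeta)} = \ker(\sigma - \zeta : Y \to Y)$ summand by summand. On the summand $Y' \otimes_Q Q_{\zeta'}$, the operator $\sigma - \zeta$ is multiplication by the scalar $\zeta' - \zeta \in Q$, which is an isomorphism when $\zeta' \neq \zeta$ (it is a unit in $A_p \subseteq Q$) and zero when $\zeta' = \zeta$. Hence $Y^{(\zeta)} \cong Y' \otimes_Q Q_\zeta$, which is just $Y'$ as a complex of $Q$-modules. Therefore $\phi^p_\zeta([Y]) = [Y']$ for every $\zeta$.

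Finally, putting this together,
$$
\phi^p([Y]) \;=\; \sum_\zeta \zeta\, \phi^p_\zeta([Y]) \;=\; \Bigl( \sum_\zeta \zeta \Bigr) [Y'] \;=\; 0,
$$
since the sum of all $p$-th roots of unity vanishes for $p$ prime. No real obstacle is expected here; the only subtle point is making sure the $C_p$-action on $Y' \otimes_Q Q[C_p]$ is the one where $C_p$ acts trivially on $Y'$, so that the decomposition above genuinely splits $Y$ into the $\zeta$-isotypic pieces.
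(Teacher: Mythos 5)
Your proposal is correct and is essentially the paper's own argument: decompose $Q[C_p]\cong\bigoplus_{\zeta'}Q_{\zeta'}$, conclude $Y^{(\zeta)}\cong Y'$ for every $\zeta$, and finish with $\sum_\zeta\zeta=0$. The only cosmetic difference is that the paper identifies $Y^{(\zeta)}=\Hom_{Q[C_p]}(Q_\zeta,Y)$ via the orthogonality $\Hom_{Q[C_p]}(Q_\zeta,Q_{\zeta'})=0$ for $\zeta\ne\zeta'$, whereas you compute the kernel of $\sigma-\zeta$ directly, using that $\zeta'-\zeta$ becomes a unit once $p$ is inverted.
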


\begin{proof} 
If $Y$ is extended, 
$$
Y^{(\zeta)} \cong
\Hom_{Q[C_p]}(Q_\zeta, Y' \otimes_Q Q[C_p]) \cong 
\Hom_{Q[C_p]}(Q_\zeta, Q[C_p]) \otimes_Q Y' \cong 
Y'
$$ 
as objects of $\cP^Z(Q)$, since $Q[C_p] = \bigoplus_\zeta Q_\zeta$ and 
$\Hom_{Q[C_p]}(Q_\zeta, Q_{\zeta'})$ is $0$ for $\zeta \ne \zeta'$ and $Q$ for $\zeta = \zeta'$.
Thus
$$
\phi^p([Y]) = \left(\sum_\zeta \zeta\right) [Y'] = 0. \qedhere
$$
\end{proof}

We claim that for each $1 \leq i \leq p-1$ and $X,Y\in \cP^Z(Q)$, 
$$
Q[\Sigma_p] \otimes_{Q[\Sigma_{i,p-i}]}  T^i(X) \otimes_Q T^{p-i}(Y) 
$$
is an extended complex of $Q[C_p]$-modules. 
Granting this claim, by Theorem \ref{thm-t^k} 
$$
t^p_\Sigma([X] + [Y]) = \sum_i
\left[Q[\Sigma_p]  \otimes_{Q[\Sigma_{i,p-i}]}  T^i(X) \otimes_Q T^{p-i}(Y) \right], 
$$
and thus Lemma \ref{lem2} shows that
$$
\cPsi^p([X] + [Y]) = \cPsi^p([X]) + \cPsi^p([Y]),
$$
and part (1) of the Theorem follows.

To prove the claim, we show more generally that 
for any  $1 \leq  i \leq p-1$
and any left $Q[\Sigma_{i, p-i}]$-module $M$, the $Q[C_p]$-module
$Q[\Sigma_p] \otimes_{Q[\Sigma_{i, p-i}]} M$ is extended. 
Let $C_p\tau_1 \Sigma_{i, p-i}, \dots,  C_p\tau_m \Sigma_{i, p-i}$ 
be a set of double coset representatives in $\Sigma_p$. Since $p$ is prime, $\tau \Sigma_{i, p-i} \tau^{-1}$ intersects $C_p$ trivially for all $\tau \in
\Sigma_p$. It follows that, for each $j$, 
$$
Q[C_p \tau_j \Sigma_{i, p-i}] \cong Q[C_p \tau_j] \otimes_Q Q[\Sigma_{i, p-i}] 
$$
as 
$Q[C_p]$-$Q[\Sigma_{i, p-i}]$-bimodules. Also, one has an isomorphism
$$
Q[\Sigma_p] \cong \bigoplus_j Q[C_p \tau_j \Sigma_{i, p-i}].
$$
of $Q[C_p]$-$Q[\Sigma_{i, p-i}]$-bimodules. Combining these gives an isomorphism
$$
Q[\Sigma_p] \otimes_{Q[\Sigma_{i, p-i}]} M \cong \bigoplus_j(Q[C_p \tau_j] \otimes_Q Q[\Sigma_{i, p-i}]) \otimes_{Q[\Sigma_{i, p-i}]} M
$$
of left $Q[C_p]$-modules. But the latter is isomorphic as a left $Q[C_p]$-module to 
$$
\bigoplus_j Q[C_p]\tau_j \otimes_Q M \cong
\bigoplus_j Q[C_p]\otimes_Q M,
$$
which is extended. 

The following Lemma will be useful in proving parts (2) and (4):

\begin{lem}
\label{philemma}
If $X, Y \in \cP^Z(Q; C_p)$ then 
$\phi^p([X] \cup [Y]) = \phi^p([X]) \cup \phi^p([Y])$.
\end{lem}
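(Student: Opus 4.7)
The plan is to establish the eigenspace decomposition of the tensor product under the diagonal action and then expand $\phi^p$ using this decomposition. The key observation, which is a consequence of $Q$ being an $A_p$-algebra, is that the group algebra decomposes as $Q[C_p] \cong \bigoplus_\zeta Q_\zeta$ via the orthogonal idempotents $e_\zeta = \frac{1}{p}\sum_{i=0}^{p-1} \zeta^{-i}\sigma^i$. Consequently, every object $X$ of $\cP^Z(Q;C_p)$ decomposes naturally and functorially as $X \cong \bigoplus_\zeta X^{(\zeta)}$, where the summand $X^{(\zeta)}$ is exactly the eigenspace on which $\sigma$ acts by $\zeta$.

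The first step is to verify that for $X, Y \in \cP^Z(Q; C_p)$, there is an isomorphism
\[
(X \otimes_Q Y)^{(\zeta)} \;\cong\; \bigoplus_{\zeta_1 \zeta_2 = \zeta} X^{(\zeta_1)} \otimes_Q Y^{(\zeta_2)}
\]
in $\cP^Z(Q)$. This follows immediately from the decompositions $X = \bigoplus_{\zeta_1} X^{(\zeta_1)}$ and $Y = \bigoplus_{\zeta_2} Y^{(\zeta_2)}$ together with the fact that $\sigma$ acts on $X^{(\zeta_1)} \otimes_Q Y^{(\zeta_2)}$ by the scalar $\zeta_1 \zeta_2$ (by definition of the diagonal action), so this tensor product lies in the $(\zeta_1\zeta_2)$-eigenspace of $X \otimes_Q Y$; collecting summands whose product equals $\zeta$ gives the claimed identity.

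The second step is to carry out the computation in $K_0^Z(Q)$:
\[
\phi^p([X] \cup [Y]) \;=\; \sum_\zeta \zeta \bigl[(X \otimes_Q Y)^{(\zeta)}\bigr] \;=\; \sum_\zeta \zeta \sum_{\zeta_1 \zeta_2 = \zeta} [X^{(\zeta_1)}] \cup [Y^{(\zeta_2)}].
\]
Reindexing the double sum by the pair $(\zeta_1, \zeta_2)$ and using $\zeta = \zeta_1 \zeta_2$, this rearranges as
\[
\left( \sum_{\zeta_1} \zeta_1 [X^{(\zeta_1)}] \right) \cup \left( \sum_{\zeta_2} \zeta_2 [Y^{(\zeta_2)}] \right) \;=\; \phi^p([X]) \cup \phi^p([Y]),
\]
which is the desired equality. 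There is no real obstacle here; the only subtlety is ensuring the eigenspace decomposition holds at the level of complexes, which is immediate from the semisimplicity of $Q[C_p]$ over the $A_p$-algebra $Q$ and the resulting exactness of each functor $(-)^{(\zeta)}$.
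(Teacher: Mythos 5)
Your proof is correct and follows essentially the same route as the paper: the paper's argument is precisely the double-sum computation you carry out, with the eigenspace decomposition $(X \otimes_Q Y)^{(\zeta)} \cong \bigoplus_{\zeta_1\zeta_2=\zeta} X^{(\zeta_1)} \otimes_Q Y^{(\zeta_2)}$ (which the paper uses implicitly and you spell out) coming from the semisimplicity of $Q[C_p]$ over the $A_p$-algebra $Q$. No gaps; your explicit justification of the first equality is a fine addition but not a different method.
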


\begin{proof}
We have
$$
\begin{aligned} 
\phi^p([X] \cup [Y]) & = \sum_\zeta \zeta \sum_{\zeta',\zeta^{''} \text{; }
  \zeta'\zeta'' = \zeta} [X^{(\zeta')}] \cup [Y^{(\zeta'')}] \\
& = (\sum_{\zeta'} \zeta' [X^{(\zeta')}]) \cup (\sum_{\zeta''}
\zeta^{''} [Y^{(\zeta'')}]) \\
& =\phi^p([X]) \cup \phi^p([Y]),
\end{aligned}
$$
where $\zeta$ ranges over all $p$-th roots of unity and $\zeta', \zeta^{''}$ range over all pairs 
of $p$-th roots of unity whose product is $\zeta$.
\end{proof}

We now prove (2). Suppose $X \in \cP^Z(Q)$ and  $Y \in \cP^W(Q)$, and recall $[X] \cup [Y] = [X \otimes_{Q} Y]$. 
The canonical isomorphism 
$$
T^p_{Q}(X \otimes_{Q} Y) \cong
T^p_Q(X) \otimes_{Q} T^p_Q(Y)
$$
of complexes over $Q$ preserves the $\Sigma_p$-action (with the action
on the right being the diagonal one). It thus follows from Lemma~\ref{philemma} that $\cPsi^p([X] \cup [Y]) = \cPsi^p([X]) \cup \cPsi^p([Y])$.

Assertion (3) is clear from the construction of $\cPsi^p$.

Using (2) 
it suffices to prove (4) when $n = 1$. 
Let $a \in Q$ be any element,  
and let $K= \Kos_Q(a)$ be the associated two-term Koszul complex. 
Recall $T^p(K)$ may be identified with the free commutative dg-$Q$-algebra generated by degree $1$ elements 
$e_1, \dots, e_p$ with differential $d(e_i) = a$. 
The action of $\s \in C_p$ is given by $\s(e_i) = e_{i+1}$, for $1 \leq i \leq p-1$ and $\s(e_p) = e_1$.

Note that the degree one part of $T^p(K)$ (i.e., the $Q$-span of $e_1, \dots, e_p$)  is the regular representation of $C_p$; 
we prove (4) by using a basis of eigenvectors instead of $e_1, \dots, e_p$.  
Explicitly, for each $p$-th root of unity $\zeta$ (including $\zeta = 1$), set
$$
v_\zeta = \frac{1}{p} \sum_i \zeta^{-i} e_i.
$$
Taking a full list of the $p$-th roots of unity $\zeta_0=1, \zeta_1, \dots, \zeta_{p-1}$ and setting 
$v_i = v_{\zeta_i}$, we see that the $v_0, \dots, v_{p-1}$ form a basis 
of the degree one part of $T^p(K)$, and hence we may view it as the exterior algebra on this list of elements. 
For this new basis, we have
$$
\sigma (v_i) =  \zeta_i v_i
$$
and
$$
d(v_0) = a 
{\textrm{ and }} 
d(v_i) = 0, {\textrm{ if }} i \neq 0. 
$$

Next note that $K_0^{V(a)}(Q; C_p)$ has a multiplication rule, given by tensoring over $Q$ and then using the diagonal action of $C_p$.
With this structure we have
$$
T^p(\Kos(a)) = (Qv_0 \stackrel{a}{\lra} Q) \otimes_Q (Qv_1 \stackrel{0}{\lra} Q) \otimes_Q \cdots  \otimes_Q (Qv_{p-1} \stackrel{0}{\lra} Q).
$$
Note that each of the factors on the right determine classes in $K_0^{V(a)}(Q; C_p)$, and so the tensor product here can be interpreted as
occurring in $K_0^{V(a)}(Q; C_p)$.

Recall that $\phi^p: K_0^{V(a)}(Q; C_p) \lra K_0^{V(a)}(Q)[\zeta]$ is  the function 
$$
\phi^p([Y]) = \sum_{i} \zeta_i[Y^{(\zeta_i)}]
$$  
so that $\cPsi^p = \phi^p \circ T^p$.
By Lemma~\ref{philemma}, $\phi^p([X \otimes Y]) = \phi^p([X]) \cup
\phi^p([Y])$ in $K_0(Q)[\zeta]$. It follows that 
\begin{align*}
\cPsi^p(\Kos(a)) 
&= \phi^p(T^p(\Kos(a))) \\
&= \phi^p(Qv_0 \stackrel{a}{\lra}  Q) \cup \phi^p(Qv_1 \stackrel{0}{\lra}  Q) \cup \dots \cup \phi^p(Q_{v_{p-1}} \stackrel{0}{\lra}  Q)
\end{align*}
It is clear that $\phi^p(Qv_0 \stackrel{a}{\lra}  Q) = (Q \stackrel{a}{\lra}  Q) = \Kos(a)$ and
$$
\phi^p(Qv_i \stackrel{0}{\lra} Q) = \phi^p([Q] - [Qv_i]) = [Q] - \zeta_i [Q].
$$
Hence
$$
\cPsi^p(\Kos(a)) = \Kos(a) \prod_{i=1}^{p-1} (1 - \zeta_i).
$$
Finally, observe that  $\prod_{i=1}^{p-1} (1 - \zeta_i)$ is the result of evaluating $\prod_{i=1}^{p-1} (t - \zeta_i)$ at $t = 1$ and that 
$$
\prod_{i=1}^{p-1} (t - \zeta_i) = (t^p - 1)/(t-1) = 1 + t + \cdots + t^{p-1}. 
$$
Thus $\prod_{i=1}^{p-1} (1 - \zeta_i) = p$ and we get
$$
\cPsi^p(\Kos(a)) = p \Kos(a).
$$
This completes the proof of Theorem \ref{psi-GS-axioms}.
\end{proof}

The next Corollary uses the following notation.
If $Q$ is regular, $Z \subseteq \Spec(Q)$ is a closed subset, and $M$ is a finitely generated $Q$-module with $\supp(M) \subseteq Z$, 
we write $[M] \in K_0^Z(Q)$ for the class of a
finite projective resolution of $M$. 
For an abelian group $A$, define $A_\Q := A \otimes_\Z \Q$. 

\begin{cor} \label{cor727} 
Let $p$ be a prime. Assume $Q$ is a regular $A_p$-algebra of dimension $d$ and $Z \subseteq \Spec(Q)$ is a closed subset of codimension $c$.  Then there is a
decomposition 
$$
K_0^Z(Q)_\Q = \bigoplus_{i=c}^d K_0^Z(Q)_\Q^{(i)}
$$
where $K_0^Z(Q)_\Q^{(i)}$ is the eigenspace of $\cPsi^p$ of eigenvalue $p^i$. Moreover, 
if $M$ is a finitely generated $Q$-module supported on $Z$, then 
$$
[M] \in \bigoplus_{i=\codim_Q(M)}^d K_0^Z(Q)^{(i)}_\Q.
$$ 
\end{cor}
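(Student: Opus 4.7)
The plan is to follow the standard Gillet--Soul\'e strategy: equip $K_0^Z(Q)_\Q$ with a descending filtration by support codimension, show $\cPsi^p$ preserves the filtration and acts as multiplication by $p^i$ on the $i$-th graded piece, and then diagonalize using that $p^c, p^{c+1}, \dots, p^d$ are distinct rationals. Since $Q$ is regular, every finitely generated $Q$-module $M$ admits a finite projective resolution, and a Postnikov-tower argument shows $K_0^Z(Q)$ is generated by the classes $[M]$ of such modules with $\supp(M) \subseteq Z$. I let $F^i K_0^Z(Q)_\Q$ be the $\Q$-subspace generated by classes $[M]$ with $\codim_Q(\supp M) \geq i$, so $F^c K_0^Z(Q)_\Q = K_0^Z(Q)_\Q$ while $F^{d+1} K_0^Z(Q)_\Q = 0$. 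Using Corollary~\ref{psi-simplified}, $T^p(F)^{(1)}$ and $T^p(F)^{(\zeta)}$ have homology supported on $\supp(M)$ whenever $F$ resolves $M$, so $\cPsi^p$ preserves $F^\bullet$.

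The key step is to show $(\cPsi^p - p^i)(F^i) \subseteq F^{i+1}$. Fix a height-$i$ prime $\mathfrak{p} \in Z$. The localization $Q \to Q_\mathfrak{p}$ is a morphism of $A_p$-algebras, so by axiom (A3) of Theorem~\ref{psi-GS-axioms}, pullback along $\Spec(Q_\mathfrak{p}) \to \Spec(Q)$ intertwines $\cPsi^p$. For $[M] \in F^i$, universal catenarity of the regular ring $Q_\mathfrak{p}$ forces $M_\mathfrak{p}$ to have finite length, so this localization lands in the subgroup of $K_0(Q_\mathfrak{p})_\Q$ of classes with finite length homology. That subgroup is one-dimensional, $\Q$-generated by $[k(\mathfrak{p})]$, and by (A2) and (A4) applied to the Koszul resolution of $k(\mathfrak{p})$ by a regular system of parameters, $\cPsi^p$ acts there as multiplication by $p^i$. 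Defining $\ell_\mathfrak{p}: F^i K_0^Z(Q)_\Q \to \Q$ to be the alternating length at $\mathfrak{p}$ of the localized homology, I get a functional satisfying $\ell_\mathfrak{p}(\cPsi^p \alpha) = p^i \ell_\mathfrak{p}(\alpha)$ that vanishes on $F^{i+1}$. As $\mathfrak{p}$ ranges over height-$i$ primes of $Z$, the joint map $(\ell_\mathfrak{p})$ is injective on $F^i/F^{i+1}$ because $\ell_\mathfrak{p}([Q/\mathfrak{q}]) = \delta_{\mathfrak{p},\mathfrak{q}}$ on the generators, which then forces $(\cPsi^p - p^i)(F^i) \subseteq F^{i+1}$.

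From this, downward induction gives $\prod_{i=c}^d (\cPsi^p - p^i) = 0$ on $K_0^Z(Q)_\Q$. Distinctness of the roots lets standard partial fractions produce mutually orthogonal projectors onto the eigenspaces, yielding the decomposition $K_0^Z(Q)_\Q = \bigoplus_{i=c}^d K_0^Z(Q)_\Q^{(i)}$. Tracking the image of each $F^j$ under these projectors shows $F^j K_0^Z(Q)_\Q = \bigoplus_{i=j}^d K_0^Z(Q)_\Q^{(i)}$, and combined with $[M] \in F^{\codim M} K_0^Z(Q)_\Q$ this gives the moreover clause. The main obstacle is the middle step: setting up $\ell_\mathfrak{p}$ carefully on the codimension filtration, verifying that axiom (A3) genuinely applies to the non-finite flat localization $Q \to Q_\mathfrak{p}$, and identifying the finite length part of $K_0(Q_\mathfrak{p})_\Q$ with $\Q$ via the length function.
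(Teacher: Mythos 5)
The paper's own proof of this corollary is essentially a citation: Theorem \ref{psi-GS-axioms} verifies the axioms (A1)--(A4) for $\cPsi^p$ on affine $A_p$-schemes, and then \cite[5.3]{GS87} is invoked, whose proof uses only those axioms (and, as the paper notes, works for regular affine schemes when the axioms are known only for affine schemes). You instead reprove the Gillet--Soul\'e structural result from scratch via the coniveau filtration, and your overall architecture --- filtration by codimension of support, $(\cPsi^p-p^i)F^i\subseteq F^{i+1}$ via localization at height-$i$ primes and the Koszul computation from (A2) and (A4), then $\prod_{i=c}^d(\cPsi^p-p^i)=0$ and partial fractions --- is the right one. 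Your worry about (A3) is unfounded: Theorem \ref{psi-GS-axioms}(3) applies to arbitrary morphisms of affine $A_p$-schemes, so localization is covered.

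There is, however, a genuine gap in your middle step as formulated. The functional $\ell_{\mathfrak p}$ is not well defined on $F^i\subseteq K_0^Z(Q)_\Q$: an element of $F^i$ is just an element of $K_0^Z(Q)_\Q$, a representing complex only has homology supported on $Z$, and its localization at a height-$i$ prime need not have finite-length homology; and defining $\ell_{\mathfrak p}$ on the module generators does not respect the relations. Concretely, if $Q$ is a Dedekind $A_p$-algebra, $Z=\Spec(Q)$, $i=1$, and $f\in Q$ is a nonzero nonunit, then $\sum_{\mathfrak p}\operatorname{ord}_{\mathfrak p}(f)\,[Q/\mathfrak p]=[Q\xra{f}Q]=0$ in $K_0^Z(Q)$, while the generator-wise value of $\ell_{\mathfrak q}$ on this sum is $\operatorname{ord}_{\mathfrak q}(f)\neq 0$; for the same reason the purported injectivity of $(\ell_{\mathfrak p})_{\mathfrak p}$ on $F^i/F^{i+1}$ cannot hold in general, since it would identify $F^i/F^{i+1}$ with the group of codimension-$i$ cycles and ignore rational equivalence. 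The repair is standard and keeps your localization idea: define $\ell_{\mathfrak p}$ on $K_0^W(Q)$ for each closed $W\subseteq Z$ with $\codim W\ge i$ --- there the homology of the localization at a height-$i$ prime is automatically of finite length, so $\ell_{\mathfrak p}$ is a well-defined homomorphism, and your (A3)+(A2)+(A4) computation over $Q_{\mathfrak p}$ (Koszul resolution of the residue field) gives $\ell_{\mathfrak p}\bigl((\cPsi^p-p^i)\alpha\bigr)=0$ for all $\alpha\in K_0^W(Q)_\Q$. Then write $\beta:=(\cPsi^p-p^i)\alpha\in K_0^W(Q)_\Q\cong G_0(W)_\Q$ as a $\Q$-combination of classes $[Q/\mathfrak q]$ with $\mathfrak q\in W$ (d\'evissage); since $\ell_{\mathfrak q}[Q/\mathfrak q']=\delta_{\mathfrak q\mathfrak q'}$ for primes of height $\ge i$, the coefficient of every height-$i$ prime in any such expression equals $\ell_{\mathfrak q}(\beta)=0$, so $\beta$ maps into $F^{i+1}K_0^Z(Q)_\Q$. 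With this replacement the rest of your argument goes through (note you only need the inclusion $F^j\subseteq\bigoplus_{i\ge j}K_0^Z(Q)^{(i)}_\Q$, not equality, to conclude the statement about $[M]$), and you have in effect reproved the affine case of \cite[5.3]{GS87} rather than citing it.
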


\begin{proof} Gillet and Soul\'e \cite[5.3]{GS87} prove that  such a decomposition exists for any ``degree $k$ Adams operation'' $\psi^k$, satisfying their four
  axioms (A1)--(A4).
The result thus follows from 
Theorem \ref{psi-GS-axioms}. (The Gillet-Soul\'e axioms involve all schemes, and their Proposition 5.3 applies to all regular schemes, but their  proof of this result
applies to regular affine schemes if
the axioms hold just for affine schemes.)
\end{proof}

We can use our cyclic Adams operators to recover a proof of Serre's Vanishing Theorem. This was proven by Gillet and Soul\'e using the Adams operations they
construct. But 
notice that the construction of our operators $\cPsi^p$ does not involve the use of $\lambda$-operations 
nor the fact that $\bigoplus_Z K_0^Z(Q)$ is a special lambda ring, both of which
are complicated aspects of Gillet and Soul\'e's proof.

\begin{cor}[Serre's Vanishing Conjecture] \label{cor728c} 
(cf. \cite[Corollary 5.6]{GS87})
Let $(Q, \fm)$ be a regular local ring of dimension $d$ 
and let $M$ and $N$ be finitely generated $Q$-modules such that $\supp(M) \cap \supp(N) = \{\fm\}$. Define $\chi_Q(M,N) = \sum_i (-1)^i \len \Tor_i^Q(M,N)$. 
If $\codim_{\Spec Q}(\supp M) + \codim_{\Spec Q} (\supp N) > d$,  then $\chi_Q(M,N) = 0$. 
\end{cor}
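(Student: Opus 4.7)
The plan is to imitate the Gillet--Soul\'e argument from \cite[\S 5]{GS87}, feeding into it the cyclic Adams operator $\cPsi^p$ in place of $\psi^p_{GS}$. All the formal input needed is already packaged in Theorem \ref{psi-GS-axioms} and Corollary \ref{cor727}, so once a small reduction is performed the proof is bookkeeping.

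First I would reduce to the case that $Q$ is an $A_p$-algebra. Since $(Q,\fm)$ is local, every prime $p$ distinct from the residue characteristic is a unit in $Q$; pick such a $p$ and pass to the extension $Q' := Q[t]/(\Phi_p(t))$ obtained by adjoining a primitive $p$-th root of unity. This extension is finite, faithfully flat, and \'etale, so $Q'$ is a finite product of regular local $A_p$-algebras. Flat base change identifies $\Tor^{Q'}_i(M \otimes_Q Q', N \otimes_Q Q')$ with $\Tor^Q_i(M, N) \otimes_Q Q'$, and in the \'etale case the length multiplier introduced by base change is exactly offset by the residue field extension degree. Thus $\chi_Q(M,N) = 0$ follows from the corresponding vanishing over each local factor of $Q'$, and we may henceforth assume $Q$ is itself an $A_p$-algebra.

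Next I would apply the eigenspace decomposition of Corollary \ref{cor727}. Writing $c = \codim_Q(\supp M)$ and $c' = \codim_Q(\supp N)$, the corollary yields decompositions
\[
[M] = \sum_{i=c}^{d} [M]_i \in K_0^{\supp M}(Q)_{\Q}, \qquad [N] = \sum_{j=c'}^{d} [N]_j \in K_0^{\supp N}(Q)_{\Q},
\]
in which $[M]_i$ and $[N]_j$ are eigenvectors of $\cPsi^p$ with eigenvalues $p^i$ and $p^j$ respectively. The multiplicativity axiom (part (2) of Theorem \ref{psi-GS-axioms}) then forces each cup product $[M]_i \cup [N]_j$, viewed in $K_0^{\{\fm\}}(Q)_{\Q}$, to be an eigenvector of eigenvalue $p^{i+j}$. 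But applying Corollary \ref{cor727} again with $Z = \{\fm\}$ (so $c = d$) shows that $K_0^{\{\fm\}}(Q)_{\Q}$ consists entirely of the single eigenspace at $p^d$, so any eigenvector with eigenvalue $p^{i+j} \neq p^d$ must vanish. Under the hypothesis $c + c' > d$, every pair $(i,j)$ appearing in the double sum satisfies $i + j \geq c + c' > d$, hence every cross term is zero and $[M] \cup [N] = 0$ in $K_0^{\{\fm\}}(Q)_{\Q}$.

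To finish, I would apply the length homomorphism $\len \colon K_0^{\{\fm\}}(Q) \to \Z$, which sends $[M] \cup [N]$ to $\chi_Q(M,N)$ by the Euler-characteristic interpretation of Tor; tensoring with $\Q$ and evaluating on the previous identity gives $\chi_Q(M,N) = 0$. The main obstacle I anticipate is not the Adams-operation bookkeeping, which is essentially automatic from Theorem \ref{psi-GS-axioms}, but rather a careful verification of the \'etale reduction step — in particular, handling the case where $Q[\zeta_p]$ splits as a product of local rings and checking that length behaves correctly along the \'etale base change so that the vanishing of $\chi$ genuinely descends to $Q$.
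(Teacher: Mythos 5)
Your proposal follows essentially the same route as the paper: reduce to an $A_p$-algebra by adjoining a primitive $p$-th root of unity (an \'etale extension, available because $p$ is a unit in the local ring $Q$), and then run the Gillet--Soul\'e eigenspace argument. The only difference is that the paper simply cites \cite[5.4, 5.6]{GS87} for the second half, whereas you write out that argument explicitly via Corollary \ref{cor727} and multiplicativity; what you wrote there is correct and is exactly what those cited proofs do.

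One small inaccuracy in your reduction step: $Q' = Q[t]/(\Phi_p(t))$ is finite \'etale over the local ring $Q$, hence semilocal, but it need \emph{not} decompose as a product of local rings (e.g.\ $\Z_{(5)}[i]$ is a semilocal domain with two maximal ideals), so ``finite product of regular local $A_p$-algebras'' is not justified, and your appeal to a ``length multiplier offset by the residue field extension degree'' is vaguer than needed. The clean fix, which is what the paper does, is to localize $Q[e^{2\pi i/p}]$ at any one maximal ideal lying over $\fm$: since the extension is unramified, $\fm Q'$ is the maximal ideal of the resulting regular local ring $Q'$, flat base change gives $\Tor_j^{Q'}(M\otimes_Q Q', N\otimes_Q Q') \cong \Tor_j^Q(M,N)\otimes_Q Q'$, and a finite-length $Q$-module and its extension to $Q'$ have the same length (a composition series over $Q$ extends to one over $Q'$ with residue-field quotients), so $\chi_Q(M,N) = \chi_{Q'}(M\otimes_Q Q', N\otimes_Q Q')$ exactly, with no multiplier to track. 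With that adjustment your argument is complete.
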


\begin{proof} 
Let $p$ be any prime distinct from the residue characteristic of $Q$, so that $p$ is invertible in $Q$. 
We start by reducing to the case where $Q$ contains the $p$-th roots of unity. Since $\frac{1}{p} \in Q$, 
the map $Q \to Q[e^{\frac{2 \pi i}{p}}]$ is finite, \'etale. Let $Q'$ be the localization of 
$Q[e^{\frac{2 \pi i}{p}}]$ at any one of the maximal ideals lying over $\fm$. Then $\fm Q'$ is the maximal ideal of $Q'$ and 
$\Tor_j^Q(M,N) \otimes_Q Q' \cong 
\Tor_j^{Q'}(M \otimes_Q Q', N \otimes_Q Q')$ for all $j$, and hence
$\chi_Q(M,N) = \chi_{Q'}(M \otimes_Q Q', N \otimes_Q Q')$.

We may thus assume
that $Q$ contains all $p$-th roots of unity. In this case $\cPsi^p$ satisfies the Gillet-Soul\'e axioms by Theorem \ref{psi-GS-axioms},  
and hence the proofs of \cite[5.4, 5.6]{GS87} apply verbatim. 
\end{proof}

\section{Commutativity of the cyclic Adams operations} \label{seccommute}

In this section, we prove that the cyclic Adams operations commute, when defined.
For any integer $k \geq 1$, set 
$A_k =  \Z\left[\scriptstyle{\frac{1}{k}}, e^{\scriptscriptstyle{\frac{2 \pi i}{k}}}\right]$.

\begin{prop} 
Assume $p$ and $q$ are distinct primes and $Q$ is a commutative, Noetherian $A_{pq}$-algebra. 
Then for any closed subset $Z$ of $\Spec(Q)$, we have an equality
$$
\cPsi^p \circ \cPsi^q = \cPsi^q \circ \cPsi^p
$$
of endomorphisms of $K_0^Z(Q)$.
\end{prop}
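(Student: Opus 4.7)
The plan is to express both compositions in terms of bi-eigenspaces of $T^{pq}(X)$ under a single commuting pair of permutation actions of $C_p$ and $C_q$, and then to match them via a symmetry automorphism from $\Sigma_{pq}$. First, unfolding the definition $\cPsi^p([X]) = \sum_{\zeta \in \mu_p} \zeta [T^p(X)^{(\zeta)}]$ and using the additivity of $\cPsi^q$ from Theorem \ref{psi-GS-axioms}(1), I obtain
\[
\cPsi^q\circ\cPsi^p([X]) = \sum_{\zeta \in \mu_p,\, \eta \in \mu_q} \zeta \eta\, [T^q(T^p(X)^{(\zeta)})^{(\eta)}],
\]
with an analogous expression for the other composition. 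I label the $pq$ tensor factors of $T^{pq}(X)$ by $(i,j) \in \Z/p \times \Z/q$ via the identification $T^{pq}(X) = T^q(T^p(X)) = T^p(T^q(X))$, and let $A, B \in \Sigma_{pq}$ be the commuting permutations $(i,j) \mapsto (i+1,j)$ and $(i,j) \mapsto (i,j+1)$, respectively; they generate a subgroup isomorphic to $C_p \times C_q$.

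The key intermediate step is to show, for each fixed $\zeta \in \mu_p$,
\[
\sum_{\eta \in \mu_q} \eta\, [T^q(T^p(X)^{(\zeta)})^{(\eta)}] = \sum_{\eta \in \mu_q} \eta\, [T^{pq}(X)^{(A=\zeta^q,\, B=\eta)}].
\]
Since $A$ acts by the scalar $\zeta^q$ on $T^q(T^p(X)^{(\zeta)}) = (T^p(X)^{(\zeta)})^{\otimes q}$, this subcomplex sits inside $T^{pq}(X)^{(A=\zeta^q)}$; the complement is the direct sum of summands $T^p(X)^{(\zeta_1)} \otimes \cdots \otimes T^p(X)^{(\zeta_q)}$ with $\prod \zeta_i = \zeta^q$ and $(\zeta_1,\dots,\zeta_q) \neq (\zeta,\dots,\zeta)$. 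Since $q$ is prime and $\gcd(p,q)=1$, the $C_q$-action (via $B$) on such nondiagonal tuples has only free orbits, so the complement is an extended $C_q$-module, and $\phi^q$ annihilates it by the argument of Lemma \ref{lem2}. Summing over $\zeta$ yields $\cPsi^q\circ\cPsi^p([X]) = \sum_{\zeta,\eta} \zeta\eta\, [T^{pq}(X)^{(\zeta^q,\eta)}]$, and symmetrically $\cPsi^p\circ\cPsi^q([X]) = \sum_{\zeta,\eta} \zeta\eta\, [T^{pq}(X)^{(\zeta,\eta^p)}]$.

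Finally, the full $\Sigma_{pq}$-action lets me reindex: for any $a \in (\Z/p)^\times$ and $b \in (\Z/q)^\times$, the permutation $\tau(i,j) = (ai,bj)$ in $\Sigma_{pq}$ satisfies $\tau A \tau^{-1} = A^a$ and $\tau B \tau^{-1} = B^b$, so its signed action on $T^{pq}(X)$ induces an isomorphism $T^{pq}(X)^{(\zeta,\eta)} \xrightarrow{\cong} T^{pq}(X)^{(\zeta^{a^{-1}},\eta^{b^{-1}})}$ and hence $[T^{pq}(X)^{(\zeta,\eta)}] = [T^{pq}(X)^{(\zeta^{a^{-1}},\eta^{b^{-1}})}]$ in $K_0^Z(Q)$. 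Taking $(a,b)$ with $a \equiv q^{-1} \pmod{p}$ and $b=1$ shows $[T^{pq}(X)^{(\zeta^q,\eta)}] = [T^{pq}(X)^{(\zeta,\eta)}]$, and taking $a=1$ and $b \equiv p^{-1} \pmod{q}$ gives $[T^{pq}(X)^{(\zeta,\eta^p)}] = [T^{pq}(X)^{(\zeta,\eta)}]$. Both compositions thus reduce to the common sum $\sum_{\zeta,\eta} \zeta\eta\, [T^{pq}(X)^{(\zeta,\eta)}]$, proving commutativity. The main obstacle will be the careful verification that the nondiagonal summands organize into free $C_q$-orbits and that the permutation $\tau$ conjugates the generators as claimed; these are direct computations in the spirit of Lemmas \ref{lem:indofzeta} and \ref{lem2}.
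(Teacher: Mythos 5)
Your proof is correct and follows essentially the same route as the paper: your ``key intermediate step'' is exactly the paper's orbit decomposition of $T^q(T^p(X))^{(\zeta)}$ into eigenspace tensor summands, with the free $C_q$-orbits giving extended complexes killed by Lemma \ref{lem2} and the diagonal summand surviving. The only difference is cosmetic: where the paper reindexes via $pq$-th roots of unity together with Lemma \ref{lem:indofzeta}, you realize the same symmetry by conjugating the commuting generators $A,B$ by an explicit normalizing permutation $\tau\in\Sigma_{pq}$, which is a perfectly valid (and equally short) way to finish.
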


\begin{proof}
Let $\zeta_p, \zeta_q$ be primitive $p$-th, $q$-th roots of unity, and consider the diagram
$$
\xymatrix{
K_0^Z(Q) \ar[r]^{t^p} \ar[rd]_{\cPsi^p} & K_0^Z(Q; C_p) \ar[d]^{\phi^p} \ar[r]^{t^q} & K_0^Z(Q; C_p \times C_q) \ar[d]^{\phi^p} \\
& K_0^Z(Q)[\zeta_p] \ar[rd]_{\cPsi^q} & K_0^Z(Q; C_q)[\zeta_p] \ar[d]^{\phi^q} \\
& & K_0^Z(Q)[\zeta_p, \zeta_q],
}
$$
where for $l = p,q$, $t^l$ is defined as the composition of the map $t_\Sigma^l$ of Theorem \ref{thm-t^k} with the restriction map induced by the inclusion
$Q[C_l] \subseteq  Q[\Sigma_l]$,
and $\phi^l$ is defined as in the proof of Theorem
\ref{psi-GS-axioms}. 
Since $\cPsi^q$ and $\phi^q$ are group homomorphisms, the two bottom arrows are well-defined.

The triangle in this diagram commutes by definition, and we will show the trapezoid commutes momentarily. Granting this, we obtain
\begin{equation} \label{E925}
\begin{aligned}
\cPsi^q (\cPsi^p([X])) 
&= \phi^q(\phi^p(t^q(t^p([X]))))\\
&=\sum_{i,j}\zeta_p^i\zeta_q^j[(T^{pq}(X)^{(\zeta_p^i)})^{(\zeta_q^j)}]\\
&= \sum_\eta \eta [T^{pq}(X)^{(\eta)}],
\end{aligned}
\end{equation}
where the last sum ranges over all $pq$-th roots of unity $\eta$. The last equality follows from the fact that for any complex with a $C_p\times C_q$ action,
the actions of $C_p$ and $C_q$ commute and each action is diagonalizable.  More precisely, for a complex $U$ with an action of $C_p\times C_q$, 
we claim that for each fixed $i$ and $j$ satisfying $1\leq i\leq p$ and $1\leq j\leq q$ we have
$$
\left(U^{(\zeta_p^i)}\right)^{(\zeta_q^j)}=U^{(\zeta_p^i\zeta_q^j)}.
$$
The containment $\subseteq$ is clear by definition, and as $U$ decomposes 
both as $\displaystyle U=\bigoplus_{j}\bigoplus_{i}\left(U^{(\zeta_p^i)}\right)^{(\zeta_q^j)}$ 
and as $\displaystyle U=\bigoplus_{i,j} U^{(\zeta_p^i\zeta_q^j)}$, equality follows.
Since the last expression of \eqref{E925} is symmetric in $p$ and $q$, the proposition follows.

It remains to show the trapezoid commutes. Let $Y \in \cP^Z(Q, C_p)$. Then $Y$ decomposes as $Y = \bigoplus_\zeta Y^{(\zeta)}$ where $\zeta$ ranges over all
$p$-th roots of unity. 
We have 
$$
\phi^q (\phi^p(t^q([Y])))  = \phi^q \left(\sum_{\zeta} \zeta [T^q(Y)^{(\zeta)}] \right)
= \sum_{\zeta} \zeta \phi^q ([T^q(Y)^{(\zeta)}])
$$
where the sum ranges over all $p$-th roots of unity and the superscript $(\zeta)$ refers to the (diagonal) $C_p$-action on $T^q(Y)$.  

For a fixed $p$-th root of unity $\zeta$, let $S_\zeta$ denote the set $\{(\zeta_1, \dots, \zeta_q)\}$ of ordered $q$-tuples of $p$-th roots of unity satisfying
$\zeta_1 \cdots \zeta_q = \zeta$. The group $C_q$ acts on $S_\zeta$ in the evident way. 
We have 
$$
T^q(Y)^{(\zeta)} = \bigoplus_{(\zeta_1, \dots, \zeta_q) \in S_\zeta} Y^{(\zeta_1)} \otimes_Q \cdots \otimes_Q Y^{(\zeta_q)}.
$$
as objects of $\cP^Z(Q, C_q)$, where the action of $C_q$ on the right is given by the action on $S_\zeta$. For each orbit $O \subseteq S_\zeta$,  
the summand 
$$
Y_O := \bigoplus_{(\zeta_1, \dots, \zeta_q) \in O} Y^{(\zeta_1)} \otimes_Q \cdots \otimes_Q Y^{(\zeta_q)}
$$
of $T^q(Y)^{(\zeta)}$ is an object of $\cP^Z(Q; C_q)$, and so we have 
$$
\phi^q ([T^q(Y)^{(\zeta)}])
= \sum_{O} \phi^q ([Y_O]).
$$
Since $q$ is prime, each orbit $O$ has order either $1$ or $q$. In the latter case 
$Y_O$ is extended, since
$$
Y_O \cong Q[C_q] \otimes _Q Y^{(\zeta_1)} \otimes_Q \cdots \otimes_Q Y^{(\zeta_q)},
$$
where $(\zeta_1, \dots, \zeta_q)$ is any chosen element of $O$, and hence, by Lemma~\ref{lem2}, $\phi^q ([Y_O]) = 0$ for such orbits. If $|O| = 1$, then its only element is $(\zeta_1, \dots, \zeta_1)$ with $\zeta_1^q = \zeta$. 
In this case, $Y_O  = T^q(Y^{(\zeta_1)})$ and so 
$$
\phi^q ([Y_O])  = \cPsi^q(Y^{(\zeta_1)}).
$$
Using that $(p,q) = 1$ and $Y^{(\zeta^q)} \cong Y^{(\zeta)}$ in $\cP^Z(Q)$, we conclude
$$
\phi^q (\phi^p(t^q([Y])))  = 
\sum_{\zeta_1} \zeta_1^q \cPsi^q([Y^{(\zeta_1^q)}]) =
\sum_{\zeta} \zeta \cPsi^q([Y^{(\zeta)}]) =
\cPsi^q (\phi^p([Y])).
$$
\end{proof}

Using this proposition, we extend the definition of the cyclic Adams operations to all positive integers.
If $k = p_1^{e_1} \cdots p_l^{e_l}$ is the prime factorization of an integer $k$ and $Q$ is an $A_k$-algebra,
we set
$$
\cPsi^k = (\cPsi^{p_1})^{\circ e_1} \circ \cdots \circ (\cPsi^{p_l})^{\circ e_l}. 
$$

\begin{rem} It seems likely that
$$
\cPsi^k([X]) = \sum_\zeta \zeta [T^k(X)^{(\zeta)}],
$$
where the sum ranges over all $k$-th roots of unity. This formula is known to hold in other contexts; see, e.g., Benson \cite{Ben84} and Theorem \ref{thm:Atiyah} below. 
\end{rem}

\section{Lambda operations and agreement with those of Gillet-Soul\'e}
\label{E^k}

As mentioned in the introduction, 
Gillet-Soul\'e \cite{GS87}  equip
$K_0^Z(Q)$ with $\lambda$ operations by using the Dold-Puppe construction of exterior powers on chain complexes.
The goal of this section is to prove that, for each $k \geq 1$, if $k!$ is invertible in $Q$, then the Gillet-Soul\'e operator $\lambda^k_{GS}$ agrees with the operator
given by taking ``naive'' $k$-th exterior powers of complexes.
We believe that this fact has been observed before by others, including M.\ Hashimoto (see \cite[\S2]{KuranoRoberts}),
but, as far as we know, a proof is not available in the literature. We therefore provide a careful one here.  

Let us explain what we mean by the ``naive'' exterior powers of a complex.
Let $Q_{\sign}$ denote $Q$ endowed with the structure of a left $Q[\Sigma_k]$-module via the sign representation: $\s \cdot q = \sign(\sigma) q$ for $\s \in
\Sigma_k, q \in Q$.
For a complex of $Q$-modules $Y$ equipped with an action of $\Sigma_k$,   define $Y^{(\sign)} = \Hom_{Q[\Sigma_k]}(Q_{\sign}, Y)$.
For any complex of $Q$-modules $X$, 
define 
$$
\Lambda^k_Q(X) := T^k(X)^{(\sign)}, 
$$
where $\Sigma_k$ acts on $T^k(X)$ as before.

For example, if $X$ is concentrated in even degrees, then
$\Lambda^k_Q(X)$ is the usual $k$-th exterior power of $X$,
realized as the submodule of anti-symmetric tensors in the $k$-th tensor power of $X$. 
Similarly if $X$ is concentrated in odd degrees, $\Lambda^k_Q(X)$ is the $k$-th
divided power of $X$.

\begin{rem} We define $\Lambda^k_Q(X)$ as a submodule of $T^k(X)$, but one could just as well define it to be a quotient module, using instead the formula
$X \otimes_{Q[\Sigma_n]} Q_\sign$. For $X$ concentrated in even degrees, this gives the usual $k$-th exterior power realized as a quotient of the $k$-th tensor
power. When $X$ is concentrated in odd degrees, one gets the $k$-th {\em symmetric} power, realized as a quotient in the standard way. If 
$k!$ is a unit in $Q$, these are naturally isomorphic constructions.
\end{rem}

The reason we call these ``naive'' exterior powers is that, in general, they do not preserve acyclicity, as the following example shows.

\begin{ex} Let $X$ be a complex of projective $Q$ modules concentrated in degrees $1$ and $2$:
$$
X = (\cdots \to 0 \to P_2 \xra{d} P_1 \to 0 \to \cdots).
$$
Then 
$$
\begin{aligned}
\Lambda_Q^k(X) = (\cdots & \to 0 \to \Lambda^k_Q(P_2) \to 
\Lambda^{k-1}_Q(P_2) \otimes_Q P_1 \to 
\Lambda^{k-2}_Q(P_2) \otimes_Q \Gamma_Q^2(P_1) \to \cdots \\
&  \to \Lambda^2_Q(P_2) \otimes_Q \Gamma_Q^{k-2}(P_1) \to 
P_2 \otimes_Q \Gamma_Q^{k-1}(P_1) \to \Gamma_Q^k(P_1) \to 0 \to \cdots),
\end{aligned}
$$
where $\Lambda^i_Q(P_2)$ and $\Gamma^j_Q(P_1)$ are  usual exterior and divided powers of (non-graded) $Q$-modules. In this complex $\Lambda^i_Q(P_2) \otimes_Q
\Gamma^{k-i}_Q(P_1)$ lies in degree $k +i$, and the differential sends $x_1 \smsh \cdots \smsh x_i \otimes \g$ to 
$$
\sum_j (-1)^{j-1} x_1 \smsh \cdots \smsh x_{j-1} \smsh x_{j+1} \smsh \cdots \smsh x_i \otimes d(x_j) \cdot \g
$$
where $\cdot$ is the multiplication operator for the divided power algebra $\Gamma_Q(P_1)$
(note that, since $\Lambda^i_Q(P_2)$ is by definition a submodule of $T^i_Q(P_2)$, the symbol $x_1 \smsh \cdots \smsh x_i$ should be interpreted as being the
element $\sum_{\s \in \Sigma_i} \sign(\s) x_{\s(1)} \otimes \cdots \otimes x_{\s(i)}$).

Now suppose that $P_1$ is free with basis $x_1, \dots, x_n$, $P_2 = P_1$, and $d$ is the identity map. Then $\Lambda^k(X)$ is in fact a summand of
the Koszul complex for the commutative ring $\Gamma_Q(P_1)$ on the sequence $x_1, \dots, x_n$ regarded as elements of $\Gamma_Q(P_1)$:
$$
\Lambda^k(X) = (\cdots \to \Gamma_Q^{k-2}(P_1)^{\oplus {n \choose 2}}  \to
\Gamma_Q^{k-1}(P_1)^{\oplus n} \xra{(x_1, \dots, x_n) }
\Gamma_Q^k(P_1) \to 0 \to \cdots) 
$$
Taking $n = 1$ and $x = x_1$ gives
$$
\begin{aligned}
\Lambda^k(X) & = (\cdots \to 0 \to Q x^{(k-1)} \xra{x} Q x^{(k)} \to 0 \to \cdots) \\
& \cong
(\cdots \to 0 \to Q \xra{k} Q \to 0 \to \cdots). \\
\end{aligned}
$$
So if $k$ is not invertible in $Q$, $\Lambda^k_Q(X)$ is not acyclic even though $X$ is.

Since the Dold-Puppe construction does preserve acyclicity, this example also shows that one must invert $k$ in order for the two $k$-th exterior power
operations to agree up to quasi-isomorphism.
\end{ex} 

\begin{rem} We will need to assume $k!$ is a unit, not just that $k$ is a unit,  in order to show that the naive $k$-th  exterior power agrees, up
  to quasi-isomorphism, with  
the $k$-th exterior power defined by  Dold-Puppe. We do not know if this assumption is essential (but suspect that it is).  
It turns out that assuming merely that $k$ is invertible suffices for the 
naive $k$-th exterior power to preserve acyclicity.
\end{rem}

\begin{rem} If $X$ is concentrated in degrees $1$ and $0$, then $\Lambda^k_Q(X)$ is quasi-isomorphic to the complex
$N(\Lambda^k(K(X))$ defined by Dold and Puppe, without any assumptions on $Q$; see \cite[2.7]{Koc01}.
\end{rem}

We now  assume $k!$ is a unit in $Q$.  In this case, (the proof of) Maschke's Theorem implies that 
$Q_{\sign}$ is a summand of $Q[\Sigma_p]$ and hence is a projective $Q[\Sigma_k]$-module.
The functor  $Y \mapsto Y^{(\sign)}$ is thus exact and
hence determines a homomorphism
$$
K_0^Z(Q; \Sigma_k) \to K_0^Z(Q).
$$
We define 
\begin{equation} \label{E1222}
\l^k_\naive: K_0^Z(Q) \to K_0^Z(Q)
\end{equation}
to be the composition of
$$
K_0^Z(Q) 
\xra{t^k_\Sigma} K_0^Z(Q, \Sigma_k) 
\xra{[Y] \mapsto [Y^{(\sign)}]}  K_0^Z(Q), 
$$
where the first map is from Theorem~\ref{thm-t^k}. So, 
$$
\l_\naive^k([X]) = [\Lambda_Q^k(X)]
$$
for any $X \in \cP^Z(Q)$.

\begin{rem} The previous example shows that one must, at the least, assume $k$ is a unit in order for $\l_\naive$ to be well-defined. We presume one must in fact
  assume $k!$ is invertible in order for it to be well-defined.
The issue is that $\Lambda^k_Q(-)$ seems unlikely to preserve
 all quasi-isomorphisms unless $k!$ is invertible.  
\end{rem}
 
The following result is likely well-known to the experts, but we include a formal proof for lack of a suitable reference. It applies to an arbitrary complex $M$
of $Q$-modules that
is concentrated in non-negative degrees. For such a complex $M$, $\Lambda^k_Q(M)$ is, as before, defined to be  $T^k_Q(M)^{(\sign)} = \Hom_{Q[\Sigma_k]}(Q^{(\sign)},
T^k_Q(M))$.

Likewise, for a simplicial $Q$-module $A$, we define $T^k_Q(A)$ to be the simplicial module obtained by applying the functor $T_Q^k(-)$ degreewise to $A$. 
The simplicial $Q$-module $T^k_Q(A)$ has an evident (unsigned) action of $\Sigma_k$, given by
$\sigma \cdot (a_1 \otimes \cdots \otimes
a_k) = a_{\sigma(1)} \otimes \cdots \otimes a_{\sigma(k)}$, making it a simplicial left $Q[\Sigma_k]$-module.
Finally, 
$\Lambda^k_Q(A)$ is defined to be the simplicial $Q$-module obtained by applying $\Hom_{Q[\Sigma_k]}(Q^{(\sign)}, -)$
degreewise  to $T^k_Q(A)$.

\begin{prop} \label{prop:folklore}
Let $Q$ be a commutative ring and $k \geq 1$ and integer such that $k!$ is invertible in $Q$. 
If $M$ is any complex of 
$Q$-modules concentrated in non-negative degrees, then there is a natural quasi-isomorphism
$$
\Lambda^k_Q(M) 
\xra{\sim}
N\!\left(\Lambda^k_Q(K(M))\right)
$$
of chain complexes, where  $K$ denotes the Dold-Puppe functor from
  chain complexes concentrated in non-negative degrees to simplicial modules and $N$ denotes 
the functor taking a simplicial $Q$-module to its associated normalized chain complex.
\end{prop}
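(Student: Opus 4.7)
The plan is to deduce the result from the Eilenberg--Zilber shuffle map, combined with the fact that the sign-isotypic projector is exact once $k!$ is invertible.

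First I would invoke the (iterated) shuffle map
$$
\nabla^k \colon T^k_Q(N(A)) \lra N(T^k_Q(A)),
$$
which for any simplicial $Q$-module $A$ is a natural quasi-isomorphism. The essential feature is its interaction with braiding: the shuffle map makes $N$ into a lax symmetric monoidal functor, so $\nabla^k$ translates the Koszul-signed symmetry on chain complexes into the unsigned symmetry on simplicial modules. Equivalently, $\nabla^k$ is $\Sigma_k$-equivariant when $T^k_Q(N(A))$ carries the signed permutation action and $N(T^k_Q(A))$ carries the action induced from the unsigned permutation action on $T^k_Q(A)$. This reduces to the case $k=2$, since adjacent transpositions generate $\Sigma_k$.

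Specializing to $A = K(M)$ and using the natural isomorphism $NK(M) \cong M$ of chain complexes, the iterated shuffle becomes a natural $\Sigma_k$-equivariant quasi-isomorphism
$$
T^k_Q(M) \lra N(T^k_Q(K(M))),
$$
with the signed action on the source and the unsigned-induced action on the target. Since $k!$ is invertible in $Q$, the Maschke idempotent $e_{\sign} := \frac{1}{k!}\sum_{\sigma \in \Sigma_k} \sign(\sigma)\,\sigma$ exhibits $Q_{\sign}$ as a direct summand of $Q[\Sigma_k]$, so $(-)^{(\sign)} = \Hom_{Q[\Sigma_k]}(Q_{\sign}, -)$ is an exact functor on $Q[\Sigma_k]$-modules and, in particular, preserves quasi-isomorphisms of chain complexes equipped with a $\Sigma_k$-action.

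Applying $(-)^{(\sign)}$ to the quasi-isomorphism just displayed produces a quasi-isomorphism whose source is by definition $\Lambda^k_Q(M)$. For the target, since $N$ is additive, it commutes with the degreewise action of $e_{\sign}$, yielding
$$
N(T^k_Q(K(M)))^{(\sign)} \;\cong\; N(T^k_Q(K(M))^{(\sign)}) \;=\; N(\Lambda^k_Q(K(M))),
$$
which composes to give the desired natural quasi-isomorphism $\Lambda^k_Q(M) \xra{\sim} N(\Lambda^k_Q(K(M)))$. The main obstacle is verifying the mixed signed/unsigned equivariance of the iterated shuffle map; this is classical (being the statement that $N$ is symmetric monoidal via $\nabla$), but it requires care with signs and ultimately reduces to checking the compatibility of the Eilenberg--Zilber map with braiding in the bivariant case. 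Notably, this equivariance does \emph{not} require $k!$ to be invertible; that hypothesis enters only to guarantee exactness of the sign-isotypic projector.
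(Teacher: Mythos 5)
Your proposal is correct and follows essentially the same route as the paper: the $\Sigma_k$-equivariance of the shuffle map (with signed action on $T^k(N(A))$, unsigned-induced action on $N(T^k(A))$), exactness of $(-)^{(\sign)}$ from invertibility of $k!$, and the identification $NK(M)\cong M$. The only minor difference is in commuting $(-)^{(\sign)}$ past $N$: you use additivity of $N$ together with the Maschke idempotent $e_{\sign}$, whereas the paper proves a general lemma (via the Moore complex) that $\Hom_R(V,-)$ commutes with normalization, valid without any invertibility hypothesis; both are adequate here.
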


\begin{proof}
We first recall some well-known results about simplicial modules 
and their normalized chain complexes. These ideas go back to Eilenberg and Maclane \cite[5.3]{EML53}; 
we refer the reader to the nice exposition found in \cite{SS03} for more details. 

For a simplicial $Q$-module $A$ and integer $k \geq 1$, 
we have the {\em shuffle map} 
$$
\nabla: T^k(N(A)) \to N(T^k(A)) 
$$
and the {\em Alexander-Whitney} map
$$
AW: N(T^k(A))  \to T^k(N(A)),
$$
both of which are natural transformations, and they satisfy the following properties:
\begin{itemize}
\item $AW \circ \nabla$ is the identity map,
\item $\nabla \circ AW$ is chain homotopic to the identity map, and
\item $\nabla$ is equivariant for the actions of $\Sigma_k$.
\end{itemize}
Concerning the last point, 
the action of $\Sigma_k$ on $T^k(N(A)) $ is the one we introduced in \S \ref{sectwo} above 
on a tensor power of a complex, and the action of $\Sigma_k$ on
$N(T^k(A))$ is induced from the action on the simplicial module $T^k(A)$, 
using that $N$ is functorial. Indeed, since $\nabla$ is injective, the action of 
$\Sigma_k$ on $T^k(N(A))$ is the unique one making $\nabla$ equivariant. 

It is important to note that $AW$ is {\em not} equivariant for the action of $\Sigma_k$, and so while
$\nabla$ is an equivariant quasi-isomorphism, it is not in general
an equivariant homotopy equivalence. 
This defect is precisely why we must assume $k!$ is invertible for this proof. 

Since we are assuming $k!$ is invertible, $Q^{(\sign)}$ is a projective  $Q[\Sigma_k]$-module and thus
the functor 
$(-)^{(\sign)} = \Hom_{Q[\Sigma_k]}(Q^{(\sign)}, -)$ takes 
equivariant quasi-isomorphisms (i.e., quasi-isomorphisms of complexes of left $Q[\Sigma_k]$-modules) to quasi-isomorphisms. In particular,  the map
$$
\nabla^{(\sign)}: 
\Lambda_Q^k(N(A)) = T^k(N(A))^{(\sign)}
\xra{\sim}
N(T^k(A))^{(\sign)}
$$
induced by the shuffle map is a quasi-isomorphism. 
Moreover, by Lemma \ref{lemma-hom-N} below 
applied with $R = Q[\Sigma_k]$, $V = Q^{(\sign)}$, and $B = T^k(A)$, 
we have an isomorphism
\begin{align*}
N(T^k(A))^{(\sign)} 
&=
\Hom_{Q[\Sigma_k]}(Q^{(\sign)}, N(T^k(A)))  \\
&\cong 
N\!\left(\Hom_{Q[\Sigma_k]}(Q^{(\sign)},T^k(A) )\right) = N\!\left(\Lambda^k_Q(A) \right). 
\end{align*}
Putting these results together gives a natural quasi-isomorphism of complexes of $Q$-modules
\begin{equation} \label{E1223}
\Lambda^k_Q(N(A)) 
\xra{\sim}
N\!\left(\Lambda^k_Q(A) \right)
\end{equation} 
for any simplicial $Q$-module $A$.

Now let $M$ be a complex of $Q$ modules concentrated in non-negative degrees. 
Since there is a natural isomorphism $N(K(M)) \cong M$ of complexes,  
we have $T^k(N(K(M)))^{(\sign)} \cong T^k(M)^{(\sign)} = \Lambda^k_Q(M)$, and thus the proposition follows from 
\eqref{E1223} applied to $A = K(M)$. 
\end{proof}

\begin{lem} \label{lemma-hom-N} 
Let $R$ be a (not necessarily commutative) ring and $B$ a simplicial left $R$-module. 
Let $V$ be any left $R$-module. Then
$$
N(\Hom_R(V, B)) \cong  \Hom_R(V, N(B))
$$
where $N(-)$ is the functor taking simplicial modules to normalized chain complexes.
\end{lem}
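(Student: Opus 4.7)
The plan is to unwind both sides using the standard presentation of the normalized chain complex as an intersection of kernels, and then invoke left exactness of $\Hom_R(V,-)$.

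First I would recall that, for a simplicial left $R$-module $B$ with face maps $d_i$ and degeneracies $s_i$, the normalized chain complex has
$$
N(B)_n = \bigcap_{i=0}^{n-1} \ker\bigl(d_i : B_n \to B_{n-1}\bigr),
$$
with differential $d_n$ (up to a sign convention). The simplicial abelian group $\Hom_R(V,B)$ is defined levelwise, so its face maps are $(d_i)_* = \Hom_R(V, d_i)$. Therefore
$$
N\bigl(\Hom_R(V, B)\bigr)_n = \bigcap_{i=0}^{n-1} \ker\bigl(\Hom_R(V, d_i)\bigr).
$$

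Next I would use that $\Hom_R(V,-)$ is left exact, hence preserves kernels and arbitrary intersections of subobjects. Applied to the inclusions $\ker(d_i) \hookrightarrow B_n$ one obtains natural identifications
$$
\Hom_R\!\left(V, \bigcap_{i=0}^{n-1} \ker(d_i)\right) \;\cong\; \bigcap_{i=0}^{n-1} \Hom_R\bigl(V, \ker(d_i)\bigr) \;\cong\; \bigcap_{i=0}^{n-1} \ker\bigl(\Hom_R(V, d_i)\bigr).
$$
The left-hand side is $\Hom_R(V, N(B)_n)$ and the right-hand side is $N(\Hom_R(V,B))_n$, giving the asserted isomorphism in each degree. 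Finally I would note that the differential on both complexes is induced by $d_n$ applied to $B$, so the isomorphisms assemble into a chain map.

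There is no serious obstacle: the only content is that $\Hom_R(V,-)$ commutes with finite intersections of kernels, which is immediate from its left exactness. The naturality in $B$ (and in $V$) is built into the construction, since every map in sight is induced from a structure map of $B$ or of the simplicial Hom.
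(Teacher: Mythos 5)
Your proof is correct and is essentially the argument in the paper: the paper also reduces to the intersection-of-kernels (Moore complex) model of $N$ and observes that $\Hom_R(V,-)$ commutes with it, the only cosmetic difference being that the paper defines $N$ as the quotient of the unnormalized complex by degeneracies and invokes the standard isomorphism $M(B)\xra{\cong}N(B)$, while you take the kernel description as the definition outright.
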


Note that in this lemma, $\Hom_R(V,B)$ denotes applying $\Hom_R(V, -)$ 
degreewise to $B$, and so $\Hom_R(V,B)$ is a simplicial module over the center of 
$R$. On the other hand, $\Hom_R(V, N(B))$ denotes applying $\Hom_R(V, -)$ 
degreewise to a chain complex in the usual way. The isomorphism is an isomorphism 
of chain complexes over the center of $R$.

\begin{proof} 
Let $C$ denote the functor sending simplicial modules to non-normalized complexes. 
Recall $C(B)_n = B_n$ with differential given by the alternating sum of the
boundary maps. Then it is clear that we have an equality (not just an isomorphism)
$$
C(\Hom_R(V,B)) = \Hom_R(V, C(B)).
$$
Now $N(B)$ is by definition the quotient complex of $C(B)$ defined by modding out 
by the subcomplex generated by degenerate simplices, but it is easier to prove the lemma by using the Moore complex:
this is defined as the subcomplex $M(B)$ of $C(B)$ with
$$
M(B)_n = \cap_{i=1}^n  \ker(d_i: B_n \lra B_{n-1}).
$$
Note that the differential on $M(B)$ is $d_0$. 

The composition of 
$$
M(B) \lra C(B) \lra N(B) 
$$
is an isomorphism of chain complexes, and the lemma thus follows from the equality
$$
M(\Hom_R(V, B)) = \Hom_R(V, M(B)),
$$
which is evident from the definitions.
\end{proof}

\begin{thm} \label{gs=hashimoto} 
If $Q$ is a commutative, Noetherian ring and $k \geq 1$ is an integer such that $k!$ 
is invertible in $Q$, then the functions $\l_\naive^k$ and $\lambda^k_{GS}$ on $K_0^Z(Q)$ coincide for all $Z$.
\end{thm}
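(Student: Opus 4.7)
The plan is to reduce the statement to an invocation of Proposition \ref{prop:folklore}, using the fact that every class in $K_0^Z(Q)$ is representable by a single complex concentrated in non-negative degrees. First I would verify that for any bounded complex $E$ of finitely generated projective $Q$-modules with homology supported on $Z$ and with $E_i = 0$ for $i < 0$, both operators can be evaluated explicitly: by definition $\lambda^k_\naive([E]) = [\Lambda^k_Q(E)]$, and by the definition Gillet and Soul\'e gave $\lambda^k_{GS}([E]) = [N(\Lambda^k_{\cO_X} K(E))]$. Proposition \ref{prop:folklore} supplies a natural quasi-isomorphism $\Lambda^k_Q(E) \xra{\sim} N(\Lambda^k_Q K(E))$ (valid since $k!$ is invertible in $Q$), and both source and target are bounded complexes of finitely generated projectives with homology supported on $Z$. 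In $K_0^Z(Q)$, the relation coming from quasi-isomorphism then gives $\lambda^k_\naive([E]) = \lambda^k_{GS}([E])$.

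Next I would show that every element of $K_0^Z(Q)$ is of the form $[E]$ for some such $E$. By Remark \ref{remK0andP} we have $[\Sigma X] = -[X]$ in $K_0^Z(Q)$, so by induction $[\Sigma^n X] = (-1)^n [X]$. An arbitrary element $\alpha \in K_0^Z(Q)$ can be written as $[X] - [Y]$ with $X, Y \in \cP^Z(Q)$; choosing an even integer $n$ large enough that $\Sigma^n Y$ is concentrated in non-negative degrees, we obtain $\alpha = [X] + [\Sigma^{n+1} Y]$, and after replacing $X$ by a sufficiently high even shift we may also assume $X$ itself is in non-negative degrees. Then $\alpha = [X \oplus \Sigma^{n+1} Y]$ is the class of a single complex concentrated in non-negative degrees. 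Combining this reduction with the paragraph above finishes the proof.

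The main (minor) obstacle is verifying that this reduction is compatible with the definitions. Specifically, I should check that $\lambda^k_\naive$ and $\lambda^k_{GS}$ are genuinely well-defined as \emph{functions} on $K_0^Z(Q)$ (not just on certain subsets), so that the equality $\lambda^k_\naive([E]) = \lambda^k_{GS}([E])$ established on representatives in non-negative degrees yields equality of the two functions on all of $K_0^Z(Q)$. For $\lambda^k_\naive$ this is \eqref{E1222}, which requires $k!$ invertible so that $Q_\sign$ is a projective $Q[\Sigma_k]$-module and the sign-isotypical functor is exact; for $\lambda^k_{GS}$ this is the content of \cite[\S4]{GS87}. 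Once both are known to give well-defined functions, the argument above directly identifies their values on the common generating set consisting of classes of non-negatively graded complexes, and hence identifies the functions themselves.
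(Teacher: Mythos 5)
Your proposal is correct and follows essentially the same route as the paper: invoke Proposition \ref{prop:folklore} to identify the two operators on classes of complexes concentrated in non-negative degrees, then observe (via Remark \ref{remK0andP} and even shifts/suspensions) that every element of $K_0^Z(Q)$ is the class of a single such complex. Your extra remarks on well-definedness and the explicit suspension bookkeeping are fine but add nothing beyond the paper's argument.
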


\begin{proof} Proposition \ref{prop:folklore} gives that
$\l_\naive^k([P]) = \l_{GS}^k([P])$ if $P \in \cP^Z(Q)$ is concentrated in non-negative degrees.
To conclude $\l_\naive^k$ and  $\l^k_{GS}$ coincide on all elements of $K_0^Z(Q)$, 
recall that every element of $K_0^Z(Q)$ equals $[P]$ for some complex $P \in \cP^Z(Q)$, 
and moreover we have $[P] = [P[2n]]$ for all $n \in \Z$ (where
$P[i]$ is the complex with $P[i]_d = P_{i+d}$). For $n$ sufficiently negative, 
$P[2n]$ is concentrated in non-negative degrees.
\end{proof}

\section{Agreement of $\cPsi$ with $\psi_{GS}$}
\label{agreement}

As in the previous section, we assume $Q$ is a commutative, Noetherian ring and $k$ is a positive integer such that $k!$ is invertible in $Q$. By Theorem
\ref{gs=hashimoto},
$\l^k_\naive = \l^k_{GS}$ on $K_0^Z(Q)$ for all $Z$ and thus in this section we write this common operator simply as $\l^k$.
The main result of this section, which is ultimately due to Atiyah, is that the $k$-th Adams operation defined via the $k$-th Newton polynomial in the $\l^i$
operators, $1\leq i\leq k$, agrees with the cyclic Adams
operation $\cPsi^k$.

\def\Zk{{\Z[{\scriptsize{\frac{1}{k!}}}]}}

In \cite{Ati66}
Atiyah  works over the complex numbers, and so some care is needed to adapt his argument to our situation. 
For a commutative ring $A$ and finite group $G$, let $P(A; G)$ denote the category of left $A[G]$-modules that are finitely generated and projective as $A$-modules,
and write $R_A(G)$ for its Grothendieck group. $R_A(G)$ is naturally isomorphic to the group $K_0^{\Spec(A)}(A; G)$ introduced above. 
We will only consider $R_A(G)$ when  $|G|$ is invertible in
$A$, and in this case (the proof of) Maschke's Theorem gives that a left $A[G]$-module is projective as an $A[G]$-module if and only if it is projective as an $A$-module. 

For groups $G, G'$ there is a pairing
$$
R_A(G) \otimes_\Z R_A(G') \to R_A(G \times G')
$$
induced by $- \otimes_A -$ with the evident $G \times G'$-action. Taking $G = \Sigma_i$ and  $G' = \Sigma_j$ and composing this pairing with extension of scalars along $\Sigma_i
\times \Sigma_j \cong \Sigma_{i,j} \leq \Sigma_{i+j}$ induces a pairing
\begin{equation} \label{E84}
- \star - : R_A(\Sigma_i) \otimes_\Z R_A(\Sigma_j) \to R_A(\Sigma_{i+j}),
\end{equation}
which is a special case of the pairing constructed in \S \ref{sectwo}.
As before, this pairing is commutative and associative and hence determines a non-unital ring
$$
R_A(\Sigma) = \bigoplus_{i \geq 1} R_A(\Sigma_i).
$$
(We could include $i = 0$, interpreting $R_A(\Sigma_0)$ as $\Z$, and make this into a unital ring, but for our purposes that would be less convenient.)
The dual version of this pairing is used by Atiyah in \cite[\S 1]{Ati66} and the version that occurs here (defined only over the complex numbers) 
appears in work of Knutson, where he calls it the ``outer product'' \cite[Page 127]{Knu73}.

For a closed subset $Z$ of $\Spec(Q)$ define $\Op K_0^Z(Q)$ to be the set of functions from $K_0^Z(Q)$ to itself. Since
$K_0^Z(Q)$ is a non-unital ring under $\cup$, $\Op K_0^Z(Q)$ is also 
a non-unital ring with  $(\a + \b)(x) = \a(x) + \b(x)$ and $(\a \cup \b)(x) = \a(x) \cup \b(x)$, for
$\a, \b \in \Op K_0^Z(Q)$ and
 $x \in K_0^Z(Q)$ (note that composition of operations is not involved).

\begin{rem} It is perhaps more sensible to define $\Op K_0^Z(Q)$ to consist of just the operations that are natural for ring
  homomorphisms in a suitable sense, as Atiyah does in a different context. The larger ring used here turns out to be more useful for our purposes.
\end{rem}

Recall we assume $Q$ is a $\Zk$-algebra. Then for $1 \leq i \leq k$, we have a bilinear pairing
$$
R_\Zk(\Sigma_i) \otimes_\Z K_0^Z(Q; \Sigma_i) \to K_0^Z(Q)
$$
given by $[M] \otimes [Y] \mapsto \Hom_{\Zk[\Sigma_i]}(M, Y)$ for $M \in P(\Zk; \Sigma_i)$ and $Y \in \cP^Z(Q; \Sigma_i)$.   
This is well-defined since the assumption that $k!$ is invertible
ensures that $M$ is projective as a left $\Zk[\Sigma_i]$-module.

Precomposing this pairing with the map $t^i_\Sigma: K_0^Z(Q) \to K_0^Z(Q; \Sigma_i)$ defined in Theorem \ref{thm-t^k} gives an additive map
$$
t: R_\Zk(\Sigma_i) \to \Op K_0^Z(Q),
$$
written $\rho \mapsto t_\rho$. 
In detail, for $\rho = [M]$ with $M \in P(\Zk; \Sigma_i)$, we have 
$$
t_\rho([X]) = [\Hom_{\Zk[\Sigma_i]}(M, T^k(X))] \in K_0^Z(Q),
$$
for any  $X \in \cP^Z(Q)$. 

\begin{ex} \label{ex86} If $\s_i \in R_\Zk(\Sigma_i)$ is the class of the sign representation, then $t_{\s_i} = \l_\naive^i$, where $\l_\naive^i$ is defined in
  \eqref{E1222}.
\end{ex}

\begin{lem} \label{lem84}
If $i + j \leq k$, then for $\rho \in R_\Zk(\Sigma_i), \rho' \in R_\Zk(\Sigma_j)$ we have $t_\rho \cup t_{\rho'} = t_{\rho \star \rho'}$.
\end{lem}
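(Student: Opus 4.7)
The plan is to unravel both sides of the claimed equality at the level of representatives and identify them via standard tensor--Hom adjunctions, being careful about the group actions.

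First, since $t$ is additive in its argument, I may assume $\rho = [M]$ and $\rho' = [M']$ with $M \in P(\Zk;\Sigma_i)$ and $M' \in P(\Zk;\Sigma_j)$. Similarly, I work with a representative $X \in \cP^Z(Q)$ of a class in $K_0^Z(Q)$, so that the left-hand side evaluated at $[X]$ is the class of
\[
\Hom_{\Zk[\Sigma_i]}(M, T^i(X)) \otimes_Q \Hom_{\Zk[\Sigma_j]}(M', T^j(X)),
\]
while the right-hand side evaluated at $[X]$ is the class of
\[
\Hom_{\Zk[\Sigma_{i+j}]}\!\left(\Zk[\Sigma_{i+j}] \otimes_{\Zk[\Sigma_{i,j}]} (M \otimes_{\Zk} M'),\; T^{i+j}(X)\right).
\]

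Next, I would apply the standard tensor--Hom adjunction (Frobenius reciprocity) along the inclusion $\Zk[\Sigma_{i,j}] \hookrightarrow \Zk[\Sigma_{i+j}]$ to rewrite the right-hand side as
\[
\Hom_{\Zk[\Sigma_{i,j}]}\!\left(M \otimes_{\Zk} M',\; \operatorname{res}^{\Sigma_{i+j}}_{\Sigma_{i,j}} T^{i+j}(X)\right).
\]
Under the identification $\Sigma_{i,j} \cong \Sigma_i \times \Sigma_j$, there is a canonical isomorphism $\operatorname{res}^{\Sigma_{i+j}}_{\Sigma_{i,j}} T^{i+j}(X) \cong T^i(X) \otimes_Q T^j(X)$ of $\Zk[\Sigma_i \times \Sigma_j]$-modules (the first $i$ tensor factors carrying the $\Sigma_i$-action and the last $j$ carrying the $\Sigma_j$-action, with the Koszul signs matching because $\Sigma_i$ and $\Sigma_j$ each act on complexes of the same total parity).

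The key step is then to split the Hom over the product group into a tensor product of Homs. Since $M$ and $M'$ are finitely generated projective over $\Zk$, there is a natural isomorphism
\[
\Hom_{\Zk[\Sigma_i \times \Sigma_j]}\!\left(M \otimes_{\Zk} M',\; A \otimes_Q B\right) \cong \Hom_{\Zk[\Sigma_i]}(M,A) \otimes_Q \Hom_{\Zk[\Sigma_j]}(M',B),
\]
for any $\Zk[\Sigma_i]$-module $A$ and $\Zk[\Sigma_j]$-module $B$ which are $Q$-flat. The easiest way to see this is via the duality $\Hom_{\Zk}(N,-) \cong N^\vee \otimes_{\Zk} -$ (valid for finitely generated projective $N$), after which one obtains $M^\vee \otimes_{\Zk} M'^\vee \otimes_{\Zk} A \otimes_Q B$ on both sides, and then takes $\Sigma_i \times \Sigma_j$-invariants (which, since $|\Sigma_i \times \Sigma_j|$ divides $k!$ and hence is invertible in $\Zk$, is an exact functor that commutes with the tensor products in question). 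Applying this with $A = T^i(X)$ and $B = T^j(X)$ yields exactly the left-hand side.

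The main obstacle is the bookkeeping in the last step: one needs to verify that the natural tensor--Hom isomorphism is equivariant for the two separate group actions and commutes with the $Q$-module structure, and that invariants (equivalently, $\Hom_{\Zk[G]}(Q^{\text{triv}},-)$ style functors) commute with the tensor products because the group orders are invertible in $\Zk$. Once these naturalities are checked, the equality of classes in $K_0^Z(Q)$ is immediate, and bilinearity extends the result from generators $[M], [M']$ to arbitrary $\rho, \rho'$.
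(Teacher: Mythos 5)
Your argument is correct and is essentially the paper's proof run in reverse: the paper passes from $\Hom_{\Zk[\Sigma_i]}(M,T^i(X))\otimes_Q\Hom_{\Zk[\Sigma_j]}(N,T^j(X))$ to $\Hom_{\Zk[\Sigma_{i,j}]}(M\otimes N, T^{i+j}(X))$ and then applies the same induction/restriction adjunction you use. The only difference is that you spell out (via $\Zk$-duality and averaging idempotents) the external-product-of-Homs isomorphism that the paper simply asserts, which is a harmless elaboration.
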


\begin{proof} 
Since $t$ is additive, we may assume $\rho$ and $\rho'$ are classes represented by modules $M \in
P(\Zk; \Sigma_i)$, $N \in P(\Zk; \Sigma_j)$. 
For any $X$ in $\cP^Z(Q)$ we have the $Q$-linear isomorphisms
$$
\begin{aligned}
\Hom_{\Zk[\Sigma_i]} & (M, T^i(X))  \otimes_Q \Hom_{\Zk[\Sigma_j]}(N, T^j(X)) \\
& \cong  \Hom_{\Zk[\Sigma_i] \otimes_\Zk \Zk[\Sigma_j]} (M \otimes_{\Zk} N, T^i(X)  \otimes_Q T^j(X)) \\
& = \Hom_{\Zk[\Sigma_{i,j}] } (M \otimes_{\Zk} N, T^{i+j}(X)) \\
& \cong \Hom_{\Zk[\Sigma_{i+j}] } \left(  (M \otimes_{\Zk} N)   \otimes_{\Zk[\Sigma_{i,j}] } \Zk[\Sigma_{i+j}], T^{i+j}(X)\right), \\
\end{aligned}
$$
where the last isomorphism holds by adjointness since  $T^{i+j}(X)$ is a $\Zk[\Sigma_{i+j}]$-module. 
The result follows.
\end{proof}

We now relate these constructions to Atiyah's work.

When $A = \C$, instead of the ring $R_\C(\Sigma)$ considered here, Atiyah defines a non-unital ring $R_\C(\Sigma)^* = \bigoplus_{i \geq 1} R_\C(\Sigma_i)^*$, where
$R_\C(\Sigma_i)^* := \Hom_\Z(R_\C(\Sigma_i), \Z)$. The multiplication rule is given by the composition of
$$
R_\C(\Sigma_i)^* \times R_\C(\Sigma_j)^* \cong R_\C(\Sigma_i \times \Sigma_j)^* \xra{\res^*} R_\C(\Sigma_{i +j})^*
$$
where $\res$ is restriction of scalars and $\res^*$ is its $\Z$-linear dual.

In fact, the rings $R_\C(\Sigma)^*$ and $R_\C(\Sigma)$ are canonically isomorphic. For recall that for any finite group $G$ 
there is a $\Z$-linear perfect pairing
$$
\langle -,- \rangle: R_\C(G) \times R_\C(G) \to \Z
$$
sending $(\rho, \rho')$ to $\frac{1}{|G|} \sum_g \trace \rho(g) \trace \rho'(g^{-1})$. In particular, these pairings determine isomorphisms $R_\C(\Sigma_i)
\cong R_\C(\Sigma_i)^*$, which, by Frobenius reciprocity, are compatible with the multiplication maps, so that $R_\C(\Sigma) \cong R_\C(\Sigma)^*$ as rings.

One advantage the ring $R_\C(\Sigma)^*$ has is that there is an evident map
$$
C(\Sigma_k)^* \to R_\C(\Sigma_k)^*
$$
where $C(G)^*$ denotes the free abelian group on the set of conjugacy classes of a finite group $G$. (We may identify  $C(G)^*$ as the $\Z$-linear dual of $C(G)$, the set of $\Z$-valued
functions on $G$ that are invariant on conjugacy classes.) The map sends $\tau \in \Sigma_k$ to the function $[\rho] \mapsto \trace \rho(\tau)$ (recall that the
character table of $\Sigma_k$ has only integer entries). These maps assemble to form a ring homomorphism 
$$
C(\Sigma)^* = \bigoplus_{i \geq 1} C(\Sigma_i)^* \to R_\C(\Sigma)^*
$$
where, for $\tau \in \Sigma_i, \tau' \in \Sigma_j$, the multiplication rule on $C(\Sigma)^*$ sends $[\tau], [\tau']$ to the class of the element of $\Sigma_{i+j}$ given by the
inclusion $\Sigma_i \times \Sigma_j \cong \Sigma_{i,j} \leq \Sigma_{i+j}$. 

We thus also have a ring homomorphism
\begin{equation} \label{E1222b}
C(\Sigma)^* \to R_\C(\Sigma)
\end{equation}
sending the class of $\tau \in \Sigma_k$ to $\sum_i n_i \rho_i$, where the $\rho_i$'s range over representatives of the irreducible representations and $n_i = \trace \rho_i(\tau)$. 

\begin{defn} Define $\a^k \in R_\C(\Sigma)$ to be the image of the class of a $k$-cycle in $C(\Sigma_k)^*$ under the map \eqref{E1222b}. 
\end{defn}

As we shall see, $\a^k$ induces the $k$-th Adams
  operation, and the key point in proving this is given by the following result of Atiyah:

\begin{thm} \cite[1.8]{Ati66}  \label{thmAtiyah} In $R_\C(\Sigma)$ we have 
$$
\a^k = Q_k(\s_1, \dots, \s_k)
$$
where $Q_k$ is the $k$-th Newton polynomial and the $\s_i$'s are as in Example \ref{ex86}.
\end{thm}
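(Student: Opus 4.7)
My plan is to prove this via the classical Frobenius characteristic map, which converts the representation ring $R_\C(\Sigma)$ into the ring of symmetric functions and matches up the relevant classes with standard symmetric-function generators.

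First, I would recall that there is a $\Z$-algebra homomorphism
\[
\mathrm{ch}: R_\C(\Sigma) \to \Lambda = \bigoplus_{n \geq 1} \Lambda^n,
\]
where $\Lambda^n$ is the $\Z$-module of symmetric functions of degree $n$, defined on the class $[\rho]$ of a representation of $\Sigma_n$ with character $\chi_\rho$ by
\[
\mathrm{ch}([\rho]) = \sum_{\lambda \vdash n} z_\lambda^{-1}\, \chi_\rho(\lambda)\, p_\lambda,
\]
with $p_\lambda$ the power-sum symmetric function and $z_\lambda$ the order of the centralizer of a permutation of cycle type $\lambda$. The key fact, due to Frobenius, is that this map is a ring homomorphism (induction from $\Sigma_{i,j}$ to $\Sigma_{i+j}$ corresponds to multiplication in $\Lambda$) and is injective on each graded piece. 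This step is classical; I would quote it from Macdonald's book rather than reprove it.

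Next I would identify the two classes that enter the statement. For $\s_i$, the character of the sign representation of $\Sigma_i$ is $\sigma \mapsto \sign(\sigma)$, and a direct computation from the formula above gives
\[
\mathrm{ch}(\s_i) = \sum_{\lambda \vdash i} z_\lambda^{-1} \sign(\lambda)\, p_\lambda = e_i,
\]
the $i$-th elementary symmetric function. For $\a^k$, I would first unwind the perfect pairing isomorphism $R_\C(\Sigma_k) \cong R_\C(\Sigma_k)^*$ to show that $\a^k$ corresponds to the class function $\chi_{\a^k}$ taking the value $k$ on the conjugacy class of $k$-cycles and $0$ elsewhere (the quick check uses $|C_\tau| = (k-1)!$ and that $\tau^{-1}$ is again a $k$-cycle). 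Plugging this into the definition of $\mathrm{ch}$, only the term $\lambda = (k)$ survives, and $z_{(k)} = k$ exactly cancels, giving
\[
\mathrm{ch}(\a^k) = p_k,
\]
the $k$-th power-sum symmetric function.

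Finally, I would invoke the classical Newton identity $p_k = Q_k(e_1, \dots, e_k)$ in $\Lambda$, where $Q_k$ is the $k$-th Newton polynomial. Combined with the ring homomorphism property of $\mathrm{ch}$ and its injectivity on $\Lambda^k$, this forces $\a^k = Q_k(\s_1, \dots, \s_k)$ in $R_\C(\Sigma_k)$, as desired. The main conceptual obstacle is really just setting up the Frobenius map and verifying its ring-homomorphism property with respect to the induction product $\star$ of \eqref{E84}; once that is in hand, the character computations for $\s_i$ and $\a^k$ are short, and the Newton identity is purely combinatorial.
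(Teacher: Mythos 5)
Your proof is correct. Note, however, that the paper itself does not prove this statement: it is quoted directly from Atiyah \cite[1.8]{Ati66}, and the only original content surrounding it in the paper is the translation from Atiyah's dual ring $R_\C(\Sigma)^*$ (with the product dual to restriction) to $R_\C(\Sigma)$ via the character pairing, plus the definition of $\a^k$ as the image of the class of a $k$-cycle. So your argument is a genuine filling-in rather than a parallel to an argument in the text, and the route you take is the classical one: the Frobenius characteristic $\mathrm{ch}$ converts the induction product $\star$ into multiplication of symmetric functions, one has $\mathrm{ch}(\s_i)=e_i$ and $\mathrm{ch}(\a^k)=p_k$, and Newton's identity $p_k=Q_k(e_1,\dots,e_k)$ plus injectivity of $\mathrm{ch}$ in degree $k$ gives the claim. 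Your unwinding of the paper's pairing is exactly the needed bridge: $\a^k=\sum_i \trace\rho_i(\tau)\,\rho_i$ has character equal, by column orthogonality (and reality of the characters of $\Sigma_k$, since $\tau^{-1}$ is again a $k$-cycle), to the centralizer order $k$ on the class of $k$-cycles and $0$ elsewhere, so only $\lambda=(k)$ contributes to $\mathrm{ch}(\a^k)$ and $z_{(k)}=k$ cancels. Two small points to state explicitly in a write-up: $(k-1)!$ is the size of the conjugacy class of $k$-cycles while the centralizer has order $k$ (both enter the check that this class function corresponds to $\a^k$ under the pairing); and since $Q_k$ is isobaric of weight $k$ with integer coefficients and no constant term, $Q_k(\s_1,\dots,\s_k)$ lies in the single graded piece $R_\C(\Sigma_k)$, where $\mathrm{ch}$ is an isomorphism onto $\Lambda^k$ over $\Z$ (irreducibles matching Schur functions), so the comparison is legitimate degreewise. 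With those details spelled out, your proof stands on its own and is essentially Atiyah's, transported to the paper's (non-dual) ring.
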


To apply Atiyah's Theorem to our situation, we use:

\begin{lem} \label{lem81}
For any positive integer $k$, the map induced by extension of scalars
$$
R_\Zk(\Sigma_i) \to R_\C(\Sigma_i)
$$
is an isomorphism for $1 \leq i \leq k$. If $i + j \leq k$, then these isomorphisms commute with the pairings $- \star -$ of \eqref{E84}.
\end{lem}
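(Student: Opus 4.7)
My plan is to prove both assertions by exploiting the fact that the symmetric group has a particularly nice representation theory over $\Z$.

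First, for the isomorphism claim: since $1 \leq i \leq k$, the order $|\Sigma_i| = i!$ divides $k!$, so $i!$ is invertible in $\Zk$. By Maschke's theorem, $\Zk[\Sigma_i]$ is a semisimple ring, and hence every object of $P(\Zk; \Sigma_i)$ decomposes as a direct sum of simple $\Zk[\Sigma_i]$-modules. The key classical fact I would invoke is that the irreducible complex representations of $\Sigma_i$ are all defined over $\Z$ (via Young's construction of Specht modules), so in particular each irreducible $\C[\Sigma_i]$-module $V$ is obtained as $V_0 \otimes_\Z \C$ for a canonical $\Z[\Sigma_i]$-module $V_0$ that is finitely generated and free as a $\Z$-module. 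Base-changing these $V_0$ to $\Zk$ produces a complete list of representatives for the simple $\Zk[\Sigma_i]$-modules (one can verify simplicity either by counting dimensions, using that the direct sum of endomorphism algebras matches $\Zk[\Sigma_i]$ after comparing ranks with the complex case, or by invoking the Wedderburn decomposition $\Q[\Sigma_i] \cong \prod_V \End_\Q(V_0 \otimes_\Z \Q)$ and base changing to $\Zk$). Consequently, both $R_\Zk(\Sigma_i)$ and $R_\C(\Sigma_i)$ are free abelian groups on the same indexing set, and extension of scalars along $\Zk \to \C$ identifies them.

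For the compatibility with the pairings $-\star-$: recall that, by construction, the $\star$ pairing on $R_A(\Sigma)$ is the composition of the external tensor product over $A$ with induction along $\Sigma_{i,j} \subseteq \Sigma_{i+j}$. Both operations commute with base change along any ring homomorphism $A \to A'$: for $M \in P(A; \Sigma_i)$ and $N \in P(A; \Sigma_j)$, one has the natural isomorphisms $(M \otimes_A N) \otimes_A A' \cong (M \otimes_A A') \otimes_{A'} (N \otimes_A A')$ and $(A'[\Sigma_{i+j}] \otimes_{A[\Sigma_{i,j}]} L) \cong A'[\Sigma_{i+j}] \otimes_{A'[\Sigma_{i,j}]}(L \otimes_A A')$ for any $A[\Sigma_{i,j}]$-module $L$. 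Applying these with $A = \Zk$ and $A' = \C$ gives the required compatibility at the level of $\cP^Z$-style categories, and passing to Grothendieck groups yields the statement.

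The main obstacle is the first part, specifically the assertion that the irreducible representations of $\Sigma_i$ are defined over $\Zk$. Over $\Q$ this is classical, but the integrality statement requires pointing to the Specht module construction (which realizes each irreducible as an explicit $\Z[\Sigma_i]$-submodule of a permutation module on tabloids). Once this is in hand, combined with semisimplicity of $\Zk[\Sigma_i]$, the comparison with the complex representation ring is essentially formal. The multiplicativity statement is, by contrast, a routine naturality argument given how $\star$ was defined.
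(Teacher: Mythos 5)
Your second assertion (compatibility of $-\star-$ with extension of scalars) is fine and matches what the paper treats as evident. The gap is in the main assertion, and it sits exactly where the real content of the lemma lies. Maschke's theorem does \emph{not} make $\Zk[\Sigma_i]$ semisimple: $\Zk$ is not a field and $\Zk[\Sigma_i]$ is not Artinian, so objects of $P(\Zk;\Sigma_i)$ do not decompose into simple modules (the trivial module $\Zk$ already has the non-split submodules $p\Zk$ for primes $p>k$). What the averaging argument actually gives is only that every object of $P(\Zk;\Sigma_i)$ is projective over $\Zk[\Sigma_i]$, hence that all short exact sequences split; it does not give that every such module is a direct sum of the base-changed Specht modules, nor that $R_\Zk(\Sigma_i)$ is free on their classes. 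Your fallback justification also fails: the natural map $\Zk[\Sigma_i]\to\prod_\lambda\End_\Zk(S_\lambda\otimes_\Z\Zk)$ is injective with source and target free $\Zk$-modules of the same rank, but injectivity plus equal rank does not give an isomorphism over $\Zk$ (the cokernel could be torsion supported at primes $p>k$ --- compare $\Z\xra{2}\Z$). Ruling that out requires an integral input at every prime $p>k$, e.g.\ that $\Z_p[\Sigma_i]$ is a maximal order when $p\nmid i!$, or that the mod-$p$ irreducibles of $\Sigma_i$ lift to characteristic zero; one also needs $\Pic(\Zk)=0$ (i.e.\ that $\Zk$ is a PID) to know projectives over the resulting matrix factors are free. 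None of this is supplied, so the claim that $R_\Zk(\Sigma_i)$ is free abelian on the same index set as $R_\C(\Sigma_i)$ --- which is essentially the lemma itself --- is asserted rather than proved.

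For comparison, the paper proves exactly this hard step (injectivity of $R_\Zk(\Sigma_i)\to R_\Q(\Sigma_i)$; surjectivity is the easy half you get from integrality of the representations) by a different route: it identifies $R_A(\Sigma_i)$ with $G$-theory for regular $A$ with $i!$ invertible, invokes Swan's localization theorem to get a right exact sequence $\bigoplus_{p>k}R_{\Z/p}(\Sigma_i)\to R_\Zk(\Sigma_i)\to R_\Q(\Sigma_i)\to 0$, and then kills the first map using the fact that every irreducible $\Z/p$-representation of $\Sigma_i$ lifts to $\Z$, via the sequence $0\to F\otimes\Zk\xra{p}F\otimes\Zk\to F\otimes\Z/p\to 0$. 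Your outline could be repaired along maximal-order lines (with $|G|$ invertible, $A[G]$ is a maximal order in the split algebra $\Q[\Sigma_i]$, and over the PID $\Zk$ this forces $\Zk[\Sigma_i]\cong\prod_\lambda M_{n_\lambda}(\Zk)$), but as written the decisive step is missing.
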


\begin{proof} 
As is well-known, the map $R_\Q(\Sigma_i) \to R_\C(\Sigma_i)$ is
an isomorphism \cite[p. 37]{JamesKerber}, and so it suffices to prove $R_\Zk(\Sigma_i) \to R_\Q(\Sigma_i)$ is an isomorphism. 

For any regular ring $A$ in which $i!$ is invertible, we may identify $R_A(\Sigma_i)$ with the Grothendieck group of all finitely generated left
$A[\Sigma_i]$-modules. It thus follows from 
a theorem of Swan \cite[Theorem 2]{Swan} that we have a right exact sequence
\begin{equation} \label{E1-5}
\bigoplus_{p > k} R_{\Z/p}(\Sigma_i) \to R_\Zk(\Sigma_i)\to R_\Q(\Sigma_i) \to 0
\end{equation}
where the direct sum ranges over primes larger than $k$. Here, for each such prime $p$, the map 
$R_{\Z/p}(\Sigma_i) \to R_\Zk(\Sigma_i)$ is induced by restriction of scalars along the surjection $\Zk[\Sigma_i] \onto \Z/p[\Sigma_i]$. 
Every irreducible $\Z/p$-representation of $\Sigma_i$ lifts to the integers \cite[p. 244]{JamesKerber}, and thus  $R_{\Z/p}(\Sigma_i)$ is generated by classes of the form $[F \otimes_\Z
\Z/p]$ where $F$ is a $\Z[\Sigma_i]$-module that is finitely generated and free as a $\Z$-module. 
The image of such a class in $R_\Zk(\Sigma_i)$ is trivial since we have the short exact sequence
$$
0 \to F \otimes \Zk \xra{p} F \otimes \Zk \to F \otimes \Z/p \to 0
$$
of $\Zk[\Sigma_i]$-modules. Thus the first map in \eqref{E1-5} is the zero map.

This establishes the first assertion, and the second one is evident from the definitions. 
\end{proof}

\begin{defn} 
If $k!$ is invertible in $Q$, write $\a^k$ also for the element of $R_\Zk(\Sigma_k)$ corresponding to $\a^k \in R_\C(\Sigma_k)$ under the isomorphism of
Lemma \ref{lem81}. 

Define $\psi^k_{At} \in \Op K_0^Z(Q)$ to be $t_{\a^k}$.
\end{defn}

\begin{prop} If $k!$ is invertible in $Q$, then 
$$
\psi^k_{At} = Q_k(\l^1, \dots, \l^k) 
$$
holds in $\Op K_0^Z(Q)$.
\end{prop}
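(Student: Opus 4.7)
The plan is to transport Atiyah's identity (Theorem \ref{thmAtiyah}) across the map $t: R_\Zk(\Sigma) \to \Op K_0^Z(Q)$, using that $t$ carries the $\star$ pairing on $R_\Zk(\Sigma)$ to the $\cup$ pairing on $\Op K_0^Z(Q)$, and that $t_{\sigma_i} = \lambda^i$.

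First, I would lift Atiyah's identity $\alpha^k = Q_k(\sigma_1,\dots,\sigma_k)$ from $R_\C(\Sigma_k)$ to $R_\Zk(\Sigma_k)$. By Lemma \ref{lem81}, the extension-of-scalars maps $R_\Zk(\Sigma_i) \to R_\C(\Sigma_i)$ are isomorphisms for $1 \leq i \leq k$, and the isomorphisms are compatible with the $\star$ pairings whenever $i+j \leq k$. Every monomial appearing in $Q_k(\sigma_1,\dots,\sigma_k)$ has weighted degree exactly $k$ (with $\sigma_i$ assigned weight $i$), so each intermediate $\star$-product involved in the expansion of the monomial lies in some $R_\Zk(\Sigma_l)$ with $l \leq k$ and is formed by a pairing with $i+j \leq k$. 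Therefore the identity $\alpha^k = Q_k(\sigma_1,\dots,\sigma_k)$ is valid in $R_\Zk(\Sigma_k)$.

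Next, I would apply $t: R_\Zk(\Sigma_k) \to \Op K_0^Z(Q)$ to both sides. The map $t$ is additive on each graded piece by construction, and Lemma \ref{lem84} gives $t_{\rho \star \rho'} = t_\rho \cup t_{\rho'}$ as soon as the weights add to at most $k$; this holds for every partial $\star$-product appearing in the monomials of $Q_k$, by the weight count above. Iterating, $t$ converts each monomial $\sigma_{i_1} \star \cdots \star \sigma_{i_m}$ with $i_1 + \cdots + i_m = k$ into the corresponding $\cup$-product $t_{\sigma_{i_1}} \cup \cdots \cup t_{\sigma_{i_m}}$ in $\Op K_0^Z(Q)$. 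By Example \ref{ex86}, $t_{\sigma_i} = \lambda^i_\naive$, and by Theorem \ref{gs=hashimoto}, $\lambda^i_\naive = \lambda^i$ (which is what allows us to use $\lambda^i$ unambiguously under the standing hypothesis that $k!$ is invertible). Stringing these together,
\[
\psi^k_{At} \;=\; t_{\alpha^k} \;=\; t_{Q_k(\sigma_1,\dots,\sigma_k)} \;=\; Q_k(t_{\sigma_1},\dots,t_{\sigma_k}) \;=\; Q_k(\lambda^1,\dots,\lambda^k).
\]

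The main obstacle is really just careful bookkeeping rather than a substantive difficulty: one must verify that every invocation of Lemma \ref{lem84} and Lemma \ref{lem81} is legitimate, i.e.\ that no intermediate $\star$-product ever escapes the range $i+j \leq k$. This is handled uniformly by noting that the Newton polynomial $Q_k$ is isobaric of weight $k$ when $\sigma_i$ is assigned weight $i$, so all subproducts have weight $\leq k$.
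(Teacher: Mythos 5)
Your proof is correct and follows the paper's own route: the paper derives the proposition directly from Theorem \ref{thmAtiyah} together with Lemmas \ref{lem84} and \ref{lem81}, exactly as you do, with your isobaric-weight observation simply making explicit why every invocation of those lemmas stays in the allowed range $i+j\leq k$.
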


\begin{proof} This is an immediate consequence of Theorem \ref{thmAtiyah} and Lemmas \ref{lem84} and \ref{lem81}.
\end{proof}

\begin{thm} \label{thm:Atiyah}
If $Q$ is a commutative, Noetherian ring and $k$ is a positive integer such that $k!$ is invertible in $Q$ and $Q$ contains all the $k$-th roots of unity, then the operator
$$
\psi^k_{At}: K_0^Z(Q) \to K_0^Z(Q)
$$
is given by
$$
[X] \mapsto \sum_{\zeta} \zeta [T^k(X)^{(\zeta)}] 
$$
where the sum ranges over all $k$-th roots of unity $\zeta$ and $T^k(X)^{(\zeta)} := \ker(\s - \zeta \colon T^k(X) \to T^k(X))$ where $\s = (1 \, 2 \, \cdots \,
k)$. 
\end{thm}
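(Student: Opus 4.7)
The plan is to rewrite both sides of the identity as values of the pairing $R_Q(\Sigma_k) \otimes_\Z K_0^Z(Q;\Sigma_k) \to K_0^Z(Q)$ (extended by scalars to $\Z[\zeta_k]$ when needed) evaluated at $[T^k(X)]$. By Frobenius reciprocity, for each $k$-th root of unity $\zeta \in Q$,
\[
T^k(X)^{(\zeta)} \;\cong\; \Hom_{Q[\Sigma_k]}\!\left(\mathrm{Ind}_{C_k}^{\Sigma_k} Q_\zeta,\; T^k(X)\right),
\]
so setting $\beta := \sum_{\zeta} \zeta \,[\mathrm{Ind}_{C_k}^{\Sigma_k} Q_\zeta] \in R_Q(\Sigma_k) \otimes_\Z \Z[\zeta_k]$, the right-hand side of the theorem is precisely the pairing of $\beta$ with $[T^k(X)]$. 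Meanwhile $\psi^k_{At}([X]) = t_{\alpha^k}([X])$ is, by definition, the pairing of the image of $\alpha^k \in R_{\Zk}(\Sigma_k)$ with $[T^k(X)]$. So the theorem reduces to showing $\alpha^k = \beta$ after suitable base change.

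To establish this, I would lift both classes to $R_{R_k}(\Sigma_k)$ where $R_k := \Zk[\zeta_k]$ is the smallest natural ring in which both sides make integral sense. A mild extension of Lemma~\ref{lem81} then applies: every nonzero prime of $R_k$ has residue characteristic strictly greater than $k$ (since $k!$ is invertible), so the Swan exact sequence has vanishing first term, yielding injections $R_{R_k}(\Sigma_k) \hookrightarrow R_{\Q(\zeta_k)}(\Sigma_k) \hookrightarrow R_\C(\Sigma_k)$. Hence it suffices to verify $\alpha^k = \beta$ at the level of complex characters. By column orthogonality of the character table of $\Sigma_k$, the character of $\alpha^k$ takes the value $|C_{\Sigma_k}(\sigma)| = k$ on $k$-cycles and zero elsewhere. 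For $\beta$, a direct Frobenius reciprocity computation gives $\chi_{\mathrm{Ind}(\chi_\zeta)}(\tau) = \tfrac{1}{k}\sum_{s:\, s^{-1}\tau s \in C_k} \chi_\zeta(s^{-1}\tau s)$, and after weighting by $\zeta$ and summing, the roots-of-unity orthogonality relation $\sum_\zeta \zeta^{1-j} = k\,\delta_{j \equiv 1 \!\!\pmod k}$ collapses the sum onto conjugates of $\sigma$, producing the same values.

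The main obstacle is the base-change bookkeeping between $R_{\Zk}(\Sigma_k)$, $R_{R_k}(\Sigma_k)$, $R_Q(\Sigma_k)$, and $R_\C(\Sigma_k)$, together with the verification that $\beta$ --- which a priori lives in the extended ring $R_Q(\Sigma_k) \otimes_\Z \Z[\zeta_k]$ --- in fact descends to an integral class agreeing with the image of $\alpha^k$. The choice of intermediate ring $R_k$ together with the Swan-style injectivity handles this descent. Once that framework is in place, the character comparison is essentially the formal content of Atiyah's original argument in complex $K$-theory transcribed into the representation ring.
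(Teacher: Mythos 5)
Your proposal is correct and follows essentially the same route as the paper: both use Frobenius reciprocity to rewrite $\sum_\zeta \zeta\,[T^k(X)^{(\zeta)}]$ as the pairing of $[T^k(X)]$ with the induced class $\ind_{C_k}^{\Sigma_k}\bigl(\sum_\zeta \zeta\,[Q_\zeta]\bigr)$, reduce the theorem to the identity $\alpha^k=\ind\bigl(\sum_\zeta \zeta\,[Q_\zeta]\bigr)$ in the representation ring of $\Sigma_k$ with $\Z[\zeta_k]$-coefficients, and verify that identity by a complex character computation, the integral descent being handled by a Lemma \ref{lem81}/Swan-type identification over $\Z[\frac{1}{k!},\zeta_k]$. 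The only cosmetic difference is that the paper checks the identity by pairing against an arbitrary $\rho$ via the perfect character form together with Frobenius reciprocity for characters, whereas you compute the induced character directly and compare it with the character of $\alpha^k$ obtained from column orthogonality (your exponent $\zeta^{1-j}$ reflects the opposite convention and collapses onto $\sigma^{-1}$ rather than $\sigma$, which is harmless since they are conjugate in $\Sigma_k$).
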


\begin{rem} If $\zeta_1$ is a primitive $k$-th root of unity, then 
$$
\sum_{\zeta} \zeta [T^k(X)^{(\zeta)}] = \sum_{d \mid k} \mu(d) [T^k(X)^{(\zeta_1^{k/d})}]
$$ 
where $\mu$ is the M\"obius function. 
\end{rem}

\begin{proof} A version of this Theorem is proven by Atiyah in a different context, at least when $k$ is prime (see \cite[2.7]{Ati66}). We give a direct proof. 

Let $B = \Z[\frac{1}{k!}, \z]$ where $\z := e^{2 \pi i/k}$. Our assumptions on $Q$ make it a $B$-algebra. 

As we noted above, there are pairings 
$$
R_B(G) \otimes_\Z K_0^Z(Q; G) \to K_0^Z(Q)
$$
for any finite group $G$, and these extend to pairings
$$
R_B(G)[\z] \otimes_\Z K_0^Z(Q; G) \to K_0^Z(Q)[\z].
$$
For $\b \in R_B(G)[\z]$, let $\b_* \colon  K_0^Z(Q; G) \to K_0^Z(Q)[\z]$ denote the induced map.

The map 
$$
[X] \mapsto \sum_{j=0}^{k-1} \z^j [T^k(X)^{(\z^j)}] 
$$
is the composition of
$$
K_0^Z(Q) \xra{t^k_\Sigma} K_0^Z(Q; \Sigma_k) \xra{\res} K_0^Z(Q; C_k) \xra{\b^k_*} K_0^Z(Q)[\z]
$$
where $\b^k \in R(C_k)[\z]$ is $\sum_{j=0}^{k-1} \z^i [B_{\zeta^j}]$. (Here $B_{\z^j}$ is the rank one representation of $C_k$ over $B$ such that $(1 \, 2 \, \cdots
\, k)$ acts as multiplication by $\z^j$.) By Frobenius reciprocity, this coincides with 
the composition of
$$
K_0^Z(Q) \xra{t^k_\Sigma} K_0^Z(Q; \Sigma_k) \xra{\ind(\b^k)_*} K_0^Z(Q)[\z]
$$
where $\ind \colon R(C_k)[\z] \to R(\Sigma_k)[\z]$ is given by extension of scalars. It therefore suffices to prove $\ind(\b^k) = \a^k$ in $R(\Sigma_k)[\z] \supseteq
R(\Sigma_k)$. 

To prove this, since 
$$
\langle -,- \rangle \colon R_B(\Sigma_k)[\z] \otimes_{\Z[\z]} R_B(\Sigma_k)[\z] \to \Z[\z].
$$
is  a perfect pairing, it suffices to prove $\langle \a^k, - \rangle = \langle \ind(\b^k), - \rangle$. 
The map $\langle \a^k, - \rangle$ sends $\rho$ to $\trace(\rho(\s))$ where $\s := (1 \, 2 \, \dots \, k)$. On the other hand,
using Frobenius reciprocity for characters, $\langle \ind(\b^k), - \rangle = \langle \b^k, \res(-) \rangle$, where the pairing on the right is the one for the
group $C_k$, and so  
$$
\langle \ind(\b^k), \rho \rangle
=
\frac{1}{k}  \sum_{i=0}^{k-1} \trace(\b^k(\s^i)) \trace(\rho(\s^{-i})).
$$
Now, 
$$
\trace(\b^k(\s^i)) = \sum_{j=0}^{k-1} \z^j \z^{ij} =  \sum_{j=0}^{k-1} \z^{(i+1)j}
= \begin{cases}
k, & \text{if $i = k-1$ and} \\
0, & \text{if $0 \leq i \leq k-2$.} \\
\end{cases}
$$
We conclude that 
$$
\langle \ind(\b^k), \rho \rangle = 
\frac{1}{k} k \trace(\rho(\s^{-(k-1)})) = \trace(\rho(\s)).
$$
\end{proof} 

Recall that Gillet and Soul\'e define their $k$-th Adams operation by 
$$
\psi^k_{GS} = Q_k(\l^1, \dots, \l^k). 
$$

\begin{cor} \label{cor930} Let $Q$ be a commutative, Noetherian ring and 
$k$ an integer  such that $k!$ is invertible in $Q$ and $Q$ contains all the $k$-th roots of unity. Then
$\cPsi^k = \psi^k_{GS}$ as operators on  
$K_0^Z(Q)$.
\end{cor}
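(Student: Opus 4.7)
The plan is to reduce the statement to the prime case and then assemble the general case using the special lambda-ring structure established by Gillet and Soul\'e. First I would observe that, under the hypothesis that $k!$ is invertible in $Q$, the same is true of $i!$ for all $1 \leq i \leq k$, so Theorem \ref{gs=hashimoto} gives $\lambda^i_{\naive} = \lambda^i_{GS}$ in this range. Together with the Proposition preceding Theorem \ref{thm:Atiyah} (which identifies $\psi^k_{At}$ with $Q_k(\lambda^1, \ldots, \lambda^k)$), this yields the identification $\psi^k_{At} = \psi^k_{GS}$. Then Theorem \ref{thm:Atiyah} supplies the crucial explicit description
$$\psi^k_{GS}([X]) \;=\; \sum_\zeta \zeta\,[T^k(X)^{(\zeta)}],$$
where the sum ranges over all $k$-th roots of unity.

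Next I would handle the case where $k = p$ is prime. Corollary \ref{psi-simplified} expresses $\cPsi^p([X]) = [T^p(X)^{(1)}] - [T^p(X)^{(\zeta)}]$ for any primitive $p$-th root of unity $\zeta$. Using Lemma \ref{lem:indofzeta} to replace each $[T^p(X)^{(\zeta')}]$ (for $\zeta'$ primitive) by a common class, and the identity $\sum_{\zeta \ne 1} \zeta = -1$, this expression rearranges into $\sum_\zeta \zeta[T^p(X)^{(\zeta)}]$. By the first step, this is exactly $\psi^p_{GS}([X])$, so $\cPsi^p = \psi^p_{GS}$.

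For general $k$ with prime factorization $k = p_1^{e_1} \cdots p_l^{e_l}$, the definition of the cyclic Adams operation is
$$\cPsi^k \;=\; (\cPsi^{p_1})^{\circ e_1} \circ \cdots \circ (\cPsi^{p_l})^{\circ e_l}.$$
Here I would invoke the fact, proved by Gillet and Soul\'e \cite[\S 4]{GS87}, that their $\lambda$-operations endow $\bigoplus_Z K_0^Z(Q)$ with the structure of a (special) lambda-ring. A standard consequence is that the associated Adams operations satisfy $\psi^r_{GS} \circ \psi^s_{GS} = \psi^{rs}_{GS}$ for all positive integers $r, s$, so
$$\psi^k_{GS} \;=\; (\psi^{p_1}_{GS})^{\circ e_1} \circ \cdots \circ (\psi^{p_l}_{GS})^{\circ e_l}.$$
Combining this factorization with the prime case established above yields $\cPsi^k = \psi^k_{GS}$.

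The main conceptual content of the proof is already in hand through Theorem \ref{thm:Atiyah}, which bridges the Newton-polynomial description of the Gillet--Soul\'e operation and the eigenspace formula defining $\cPsi^p$; the hard work of identifying ``naive'' and Dold--Puppe exterior powers is carried out in Theorem \ref{gs=hashimoto}. What remains here is a routine assembly, and the only step that might require a brief verification is the trivial check that Corollary \ref{psi-simplified} really agrees with the full sum $\sum_\zeta \zeta[T^p(X)^{(\zeta)}]$. There is no serious obstacle.
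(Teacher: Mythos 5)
Your proposal is correct and follows essentially the same route as the paper: reduce to the prime case via the factorizations $\cPsi^k = (\cPsi^{p_1})^{\circ e_1} \circ \cdots$ and $\psi^k_{GS} = (\psi^{p_1}_{GS})^{\circ e_1} \circ \cdots$ (the paper cites \cite[I.6.1]{FL85} for the latter, which is the same special-lambda-ring fact you invoke), and then settle the prime case by combining Theorem \ref{gs=hashimoto}, the proposition identifying $\psi^k_{At}$ with $Q_k(\lambda^1,\dots,\lambda^k)$, Theorem \ref{thm:Atiyah}, and Corollary \ref{psi-simplified} with $\sum_{\zeta \ne 1}\zeta = -1$. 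The paper's proof is just a terser version of exactly this argument.
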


\begin{proof} Let $k = p_1^{e_1} \cdots p_m^{e_m}$ be the prime factorization of $k$. Recall that 
$$
\cPsi^k = \left(\cPsi^{p_1}\right)^{\circ e_1} \circ \cdots \circ \left(\cPsi^{p_m}\right)^{\circ e_m} 
$$
by definition. The analogous formula holds for $\psi^k_{GS}$ by \cite[I.6.1]{FL85}. It therefore suffices to consider the case that $k = p$ is prime. 
In this case, the result follows from Theorem \ref{thm:Atiyah}. 
\end{proof}


\bibliographystyle{amsalpha}
\bibliography{bibliography}

\providecommand{\bysame}{\leavevmode\hbox to3em{\hrulefill}\thinspace}
\providecommand{\MR}{\relax\ifhmode\unskip\space\fi MR }
\providecommand{\MRhref}[2]{%
  \href{http://www.ams.org/mathscinet-getitem?mr=#1}{#2}
}
\providecommand{\href}[2]{#2}
\begin{thebibliography}{EML53}

\bibitem[Ati66]{Ati66}
M.~F. Atiyah, \emph{Power operations in {$K$}-theory}, Quart. J. Math. Oxford
  Ser. (2) \textbf{17} (1966), 165--193.

\bibitem[Ben84]{Ben84}
D.~J. Benson, \emph{Lambda and psi operations on {G}reen rings}, J. Algebra
  \textbf{87} (1984), no.~2, 360--367.

\bibitem[DK12]{DK12}
H.~{Dao} and K.~{Kurano}, \emph{{Hochster's theta pairing and numerical
  equivalence}}, ArXiv e-prints (2012).

\bibitem[Dol58]{Dol58}
Albrecht Dold, \emph{Homology of symmetric products and other functors of
  complexes}, Ann. of Math. (2) \textbf{68} (1958), 54--80.

\bibitem[DP58]{DP58}
Albrecht Dold and Dieter Puppe, \emph{Non-additive functors, their derived
  functors, and the suspension homomorphism}, Proc. Nat. Acad. Sci. U.S.A.
  \textbf{44} (1958), 1065--1068.

\bibitem[EML53]{EML53}
Samuel Eilenberg and Saunders Mac~Lane, \emph{On the groups of {$H(\Pi,n)$}.
  {I}}, Ann. of Math. (2) \textbf{58} (1953), 55--106.

\bibitem[End70]{End69}
Werner End, \emph{\"{U}ber {A}dams-{O}perationen {I}}, Invent. Math. \textbf{9}
  (1969/1970), 45--60.

\bibitem[FL85]{FL85}
William Fulton and Serge Lang, \emph{Riemann-{R}och algebra}, Grundlehren der
  Mathematischen Wissenschaften [Fundamental Principles of Mathematical
  Sciences], vol. 277, Springer-Verlag, New York, 1985.

\bibitem[GS87]{GS87}
H.~Gillet and C.~Soul\'e, \emph{Intersection theory using {A}dams operations},
  Inventiones Mathematicae \textbf{90} (1987), 243--277.

\bibitem[Hau09]{Haution}
Olivier Haution, \emph{Steenrod operations and quadratic forms}, Ph.D. thesis,
  2009.

\bibitem[JK81]{JamesKerber}
Gordon James and Adalbert Kerber, \emph{The representation theory of the
  symmetric group}, Encyclopedia of Mathematics and its Applications, vol.~16,
  Addison-Wesley Publishing Co., Reading, Mass., 1981, With a foreword by P. M.
  Cohn, With an introduction by Gilbert de B. Robinson. \MR{644144 (83k:20003)}

\bibitem[Kan58]{Kan58}
Daniel~M. Kan, \emph{Functors involving c.s.s. complexes}, Trans. Amer. Math.
  Soc. \textbf{87} (1958), 330--346.

\bibitem[Knu73]{Knu73}
Donald Knutson, \emph{{$\lambda $}-rings and the representation theory of the
  symmetric group}, Lecture Notes in Mathematics, Vol. 308, Springer-Verlag,
  Berlin-New York, 1973.

\bibitem[K{\"o}c01]{Koc01}
Bernhard K{\"o}ck, \emph{Computing the homology of {K}oszul complexes}, Trans.
  Amer. Math. Soc. \textbf{353} (2001), no.~8, 3115--3147 (electronic).

\bibitem[KR00]{KuranoRoberts}
Kazuhiko Kurano and Paul~C. Roberts, \emph{Adams operations, localized {C}hern
  characters, and the positivity of {D}utta multiplicity in characteristic
  {$0$}}, Trans. Amer. Math. Soc. \textbf{352} (2000), no.~7, 3103--3116.
  \MR{1707198 (2000j:13005)}

\bibitem[Rob85]{Rob85}
Paul Roberts, \emph{The vanishing of intersection multiplicities of perfect
  complexes}, Bull. Amer. Math. Soc. (N.S.) \textbf{13} (1985), no.~2,
  127--130.

\bibitem[Rob96]{RobertsUnpub}
\bysame, \emph{Adams operations}, unpublished leture notes, given to us by the
  author, 1996.

\bibitem[Ser65]{Ser65}
Jean-Pierre Serre, \emph{Alg\`ebre locale. {M}ultiplicit\'es}, Cours au
  Coll\`ege de France, 1957--1958, r\'edig\'e par Pierre Gabriel. Seconde
  \'edition, 1965. Lecture Notes in Mathematics, vol.~11, Springer-Verlag,
  Berlin-New York, 1965.

\bibitem[SS03]{SS03}
Stefan Schwede and Brooke Shipley, \emph{Equivalences of monoidal model
  categories}, Algebr. Geom. Topol. \textbf{3} (2003), 287--334 (electronic).

\bibitem[Swa63]{Swan}
Richard~G. Swan, \emph{The {G}rothendieck ring of a finite group}, Topology
  \textbf{2} (1963), 85--110. \MR{0153722 (27 \#3683)}

\end{thebibliography}

\end{document}